\newtheorem{theorem}{Theorem}[section]
\newtheorem{lemma}{Lemma}[section]
\newtheorem{proposition}{Proposition}[section]
\def \a{\alpha}
\def \z{\zeta}
\def\k{\textbf{k}}
\def\hh{\mathfrak{h}}
\def\SH{\shuffle}
\def\SH{\shuffle}
\title{An algebraic proof of the duality of multiple zeta-star values of height one}
\author{Nita Tamang$^\dag$, Pitu Sarkar$^\ddag$ \\ Department of Mathematics, University of North Bengal$^{\dag,\ddag}$ \\ West Bengal, India 734013 \\ nita\_math@nbu.ac.in$^\dag$, pitucob2016@gmail.com$^\ddag$}
\date{}
\begin{document}
	\maketitle

\begin{center}
	\textbf{Abstract}
\end{center}
Shuffle algebra has been employed to give a proof of the duality theorem for multiple zeta-star values of height one.

\vspace{.8cm}
\textbf{Keywords:} Multiple zeta values; Multiple zeta-star values; Duality.\\
\hspace*{.66cm}\textbf{Mathematics Subject Classification: 11M32.} 	
	
	\section{Introduction}
	
	After over two centuries of oblivion,  M. E. Hoffman \cite{10} and D. Zagier \cite{38} independently rediscovered multiple zeta values in the 1990s. It was first studied in 1775 by L. Euler in a special case \cite{23}. These values appear in various contexts in mathematics and physics, such as Combinatorics, Knot theory, Mirror symmetry, Motivic theory, Feynman integrals, Witten’s zeta functions, Kontsevich’s deformation quantization, etc. The last two decades saw extensive work by various mathematicians and physicists, including Brown, Broadhurst, Kreimer Deligne, Ecalle, Cartier, Drinfeld, Hoffman, Goncharov, Hain, Kontsevich, Terasoma, Zagier and many others. Study of Multiple zeta values are nowadays an active and fast-moving field of research.
	
	A finite sequence of positive integers $\k=\left(k_1,\dots,k_n\right)$ is called an index. We call an index  $\k$  admissible when $k_1 \geq 2$.
	For an admissible index $\k=\left(k_1,\dots,k_n\right)$,
	the multiple zeta value (MZV) $\zeta(\k)$ and the multiple zeta-star value(MZSV) $\zeta^*(\k)$  are   defined respectively by \\
	$$\zeta(\k)= \zeta \left( k_1,\dots,k_n\right)=\sum_{m_1>\dots >m_n\geq 1}\dfrac{1}{m_1^{k_1}\dots m_n^{k_n}},$$
	
	and $$\zeta^{*}(\k)= \zeta^{*}\left( k_1,\dots,k_n\right)=\sum_{m_1\geq\dots\geq m_n\geq 1}\dfrac{1}{m_1^{k_1}\dots m_n^{k_n}}.$$

	
	For an index $\k=\left(k_1,\dots,k_n\right)$, weight, depth and height of $\z(\k)$ or $\z^*(\k) $  are respectively given by
	$$\text{wt}(\k)=k_1+\dots+k_n,~\text{dep}(\k)=n,~\text{and} ~\text{ht}(\k)=\#\{l:1 \leq l \leq n, k_l \geq 2\}.$$

	Among numerous results, an important and an interesting one in the study of MZV's is the duality formula. Any multi-index $\k$ is written in the form $$\k=(a_1+1,\underbrace{1, \dots , 1}_{b_1-1},a_2+1,\underbrace{1, \dots , 1}_{b_2-1}, \dots ,a_u+1,\underbrace{1, \dots , 1}_{b_u-1}), $$
	where $a_1,b_1, \dots , a_u,b_u$ are all positive integers. The dual index $\k'$ of $\k$ is defined by
	$$\k'=(b_u+1,\underbrace{1, \dots , 1}_{a_u-1},b_{u-1}+1,\underbrace{1, \dots , 1}_{a_{u-1}-1}, \dots ,b_1+1,\underbrace{1, \dots , 1}_{a_1-1}). $$
	
	Then the duality formula for multiple zeta values states as:
	
	\begin{theorem}
		\label{thm1}
		(Duality formula) For an index $\k$ and its dual $\k'$, the following equality holds:
		$$\z(\k)=\z(\k').$$
	\end{theorem}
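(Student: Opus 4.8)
The plan is to deduce the duality (Theorem~\ref{thm1}) from a reflection symmetry of an iterated-integral representation of $\z(\k)$ under the change of variable $t\mapsto 1-t$. Introduce on the interval $(0,1)$ the two differential forms $\omega_0=\frac{dt}{t}$ and $\omega_1=\frac{dt}{1-t}$. The first step is to establish the classical identity that, for an admissible index $\k=(k_1,\dots,k_n)$ of weight $w=k_1+\dots+k_n$,
$$\z(\k)=\int_{1>t_1>\dots>t_w>0}\omega_{\epsilon_1}(t_1)\,\omega_{\epsilon_2}(t_2)\cdots\omega_{\epsilon_w}(t_w),$$
where the binary word $(\epsilon_1,\dots,\epsilon_w)$ is
$$(\underbrace{0,\dots,0}_{k_1-1},1,\underbrace{0,\dots,0}_{k_2-1},1,\dots,\underbrace{0,\dots,0}_{k_n-1},1).$$
The condition $k_1\ge 2$ forces $\epsilon_1=0$, and the word always ends in $\epsilon_w=1$; these are exactly the conditions guaranteeing convergence of the integral at the endpoints $t_1=1$ and $t_w=0$. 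To prove the identity I would expand each factor $\omega_1(t)=\sum_{m\ge 1}t^{m-1}\,dt$ as a geometric series and integrate the resulting nested integral term by term, recovering the defining multiple series of $\z(\k)$.

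The second step is to rephrase both the encoding above and the index duality of the theorem as operations on binary words. Writing $\k$ in the block form of the statement, a single block $(a_j+1,\underbrace{1,\dots,1}_{b_j-1})$ encodes as the word $\underbrace{0\cdots0}_{a_j}\underbrace{1\cdots1}_{b_j}$, so that the word attached to $\k$ is the concatenation $0^{a_1}1^{b_1}0^{a_2}1^{b_2}\cdots 0^{a_u}1^{b_u}$. Reversing this word and simultaneously exchanging the two symbols $0\leftrightarrow 1$ produces $0^{b_u}1^{a_u}0^{b_{u-1}}1^{a_{u-1}}\cdots 0^{b_1}1^{a_1}$, and decoding this back through the correspondence of the first step gives precisely the dual index $\k'$ as defined in the excerpt. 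Thus duality of indices is nothing but reversal-and-complement of the associated words.

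The third step supplies the analytic symmetry linking the two words. Performing the substitution $t_i=1-s_{w+1-i}$ sends the open simplex $1>t_1>\dots>t_w>0$ bijectively onto $1>s_1>\dots>s_w>0$, while the reflection interchanges the roles of $\omega_0$ and $\omega_1$; reindexing the product then shows
$$\int_{1>t_1>\dots>t_w>0}\omega_{\epsilon_1}\cdots\omega_{\epsilon_w}=\int_{1>s_1>\dots>s_w>0}\omega_{1-\epsilon_w}\cdots\omega_{1-\epsilon_1}.$$
The integrand on the right is exactly the reversal-and-complement word of the left, which by the second step is the word of $\k'$; reading the right-hand side back through the integral representation yields $\z(\k)=\z(\k')$.

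I expect the genuine work to lie in the first and third steps rather than in the conceptual outline. In the first step one must justify interchanging the infinite summation with the iterated integration and control convergence at the boundary of the simplex, which is where admissibility is really used. In the third step the bookkeeping must be done carefully: one checks that the Jacobian of $t_i=1-s_{w+1-i}$ has absolute value $1$, that the substitution indeed reverses the order of the variables, and that after reindexing by $j=w+1-i$ the sign produced by $dt=-ds$ on each of the $w$ forms leaves no extraneous factor, so that the transformed word is exactly $(1-\epsilon_w,\dots,1-\epsilon_1)$. Once these routine but delicate verifications are in place, the block-by-block matching of the second step closes the argument.
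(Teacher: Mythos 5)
The paper does not actually prove Theorem \ref{thm1}; it only states it and cites Zagier's proof ``using the iterated integral representation of multiple zeta values,'' which is precisely the argument you give: the Kontsevich--Drinfeld integral representation over the simplex with $\omega_0=dt/t$, $\omega_1=dt/(1-t)$, the identification of index duality with reversal-and-complement of the binary word, and the reflection $t\mapsto 1-t$. Your outline is correct, and the points you flag as needing care (term-by-term integration justified by positivity/admissibility, and the sign and orientation bookkeeping under the substitution) are exactly the routine verifications required to complete it.
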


	In case of height one, the formula reduces to
	$$\z(k+1,\underbrace{1, \dots ,1}_{n-1})=\z(n+1,\underbrace{1, \dots ,1}_{k-1})$$ for $k,n \geq 1$.
	Duality formula was proved in \cite{38} by  D. Zagier  using the iterated integral representation of multiple zeta values and was also proved as a consequence of regularized double shuffle relation by Jun Kajikawa in \cite{39}.
	Many researchers tried to find a similar result for multiple zeta-star values but nothing of that sort could be proved until 2007, when
	M. Kaneko conjectured, and Y. Ohno proved the following theorem, which is a kind of duality for multiple zeta-star values of height one.
	\begin{theorem}
		\label{T2}	For any integers $k,n \geq 1$, we have
		$${(-1)^k}\z^*(k+1,\underbrace{1, \dots ,1}_{n})-{(-1)^n}\z^*(n+1,\underbrace{1, \dots ,1}_{k}) \in \mathbb{Q}[\z(2),\z(3),\z(5), \dots],$$ the right-hand side being the algebra over $\mathbb{Q}$ generated by the values of the Riemann zeta function at positive integer arguments($>1$).
	\end{theorem}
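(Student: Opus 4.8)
The plan is to translate everything into Hoffman's shuffle algebra $\h=\mathbb{Q}\langle x,y\rangle$, in which an index $(k_1,\dots,k_d)$ is encoded by the word $x^{k_1-1}y\cdots x^{k_d-1}y$; in particular the height-one index $(k+1,\{1\}^{n})$ becomes $x^ky^{n+1}$, and $\z(k+1,\{1\}^{n})=Z(x^ky^{n+1})$ for the evaluation map $Z$, which is a homomorphism for the shuffle product $\SH$. Two facts drive the argument. First, every product $\z(a_1)\cdots\z(a_r)$ of single zeta values lies in $\mathbb{Q}[\z(2),\z(3),\dots]$ and is exactly $Z\bigl(x^{a_1-1}y\SH\cdots\SH x^{a_r-1}y\bigr)$; hence it suffices to prove that the combination in Theorem \ref{T2} is a $\mathbb{Q}$-linear combination of such shuffle products, i.e.\ that it vanishes modulo the shuffle ideal generated by the single-letter words $x^{a-1}y$. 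Second, the duality of Theorem \ref{thm1} is the statement that the anti-automorphism $\tau$ exchanging $x\leftrightarrow y$ and reversing words fixes $Z$ on admissible words; on iterated integrals $\tau$ is realised by the substitution $t_i\mapsto 1-t_{N+1-i}$.

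The first main step is to package the star values of a fixed leading weight into one generating element. Expanding the inner sum of $\z^*(k+1,\{1\}^{n})$ as a complete homogeneous symmetric function of $1,\tfrac12,\dots,\tfrac1m$ and summing the resulting geometric series (equivalently, reading off the merge expansion $\z^*(k+1,\{1\}^{n})=\sum_{i\ge0}\sum_{\lambda}\z(k+1+i,\lambda)$ over compositions $\lambda$ of $n-i$), I would identify $\sum_{n\ge0}\z^*(k+1,\{1\}^{n})t^n$ with a single shuffle-regularised iterated integral: multiplying the word $x^ky$ by the geometric merge series in $y$ and regularising gives, in its integral incarnation,
\[
\sum_{n\ge0}\z^*(k+1,\{1\}^{n})\,t^n=-t\int_0^1(1-u)^{-t-1}\,\mathrm{Li}_{k+1}(u)\,du,
\]
where $\mathrm{Li}_{k+1}(u)$ is itself the iterated integral attached to $x^ky$. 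This converts the whole height-one family into one analytic object governed by shuffle relations, whose Taylor coefficients in $t$ recover the $\z^*(k+1,\{1\}^{n})$.

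The second step is to introduce the signs and symmetrise. Summing the identity above against $(-s)^k$ over $k\ge1$ builds the double generating series of the quantities $(-1)^k\z^*(k+1,\{1\}^{n})$; the sign is exactly the $s\mapsto-s$ substitution that the reflection formula for $\mathrm{Li}$ favours. Applying $\tau$ (equivalently $u\mapsto 1-u$ in the integral) interchanges the roles of the two weights, so that the swapped family $(-1)^n\z^*(n+1,\{1\}^{k})$ appears; subtracting the two, the expectation is that the genuinely multiple contributions are carried into one another by duality and cancel, while the boundary and reflection terms assemble into a combination of the Aomoto--Drinfeld kernel $\Gamma(1-s)\Gamma(1-t)/\Gamma(1-s-t)$ and its logarithmic derivatives. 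Since $\log\Gamma(1-x)=\g x+\sum_{j\ge2}\z(j)x^j/j$, the logarithm of this kernel is $\sum_{j\ge2}\tfrac{\z(j)}{j}\bigl(s^j+t^j-(s+t)^j\bigr)$, all of whose coefficients are single zeta values; hence every Taylor coefficient of the antisymmetrised series would lie in $\mathbb{Q}[\z(2),\z(3),\dots]$, which is the assertion.

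The hard part will be the cancellation bookkeeping in this last step. The intermediate integrals diverge at $u=1$ and must be handled by shuffle regularisation, so one has to prove that the regularisation parameter $T=\z(1)$ drops out of the final antisymmetric combination; one must also verify that $\tau$ sends the depth-$\ge 2$ part of one family precisely onto that of the other with the correct sign, so that no genuine multiple zeta value of depth at least two survives. Establishing this matching cleanly — rather than through a term-by-term expansion of the $2^{n}$ merge terms — is the crux, and the diagonal case $k=n$, where the statement must collapse to $0$, is a useful consistency check throughout.
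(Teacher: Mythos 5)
Your proposal takes a genuinely different route from the paper: it is essentially the analytic generating-function proof of Kaneko--Ohno via the Arakawa--Kaneko-type integral and the $\Gamma$-kernel, whereas the paper works almost entirely inside the shuffle algebra. The analytic ingredients you set up are sound as far as they go: the representation $\sum_{n\ge0}\zeta^*(k+1,\underbrace{1,\dots,1}_{n})\,t^n=-t\int_0^1(1-u)^{-t-1}\mathrm{Li}_{k+1}(u)\,du$ is correct for $\mathrm{Re}(t)<0$, and the observation that $\log\Gamma(1-x)=\gamma x+\sum_{j\ge2}\zeta(j)x^j/j$ makes every Taylor coefficient of the Aomoto--Drinfeld kernel a polynomial in single zeta values is the same reduction the paper performs with its formulas (1) and (2). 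But the argument stops precisely where the theorem lives. The assertion that, after antisymmetrising in $(k,n)$, the ``genuinely multiple contributions are carried into one another by duality and cancel'' \emph{is} the content of the result, and nothing in the proposal establishes it. Duality acts on individual admissible words (or via $u\mapsto1-u$ on a single iterated integral), while the merge expansion of $\zeta^*(k+1,\underbrace{1,\dots,1}_{n})$ is a sum of multiple zeta values of varying depths whose duals are not, term by term, the constituents of $\zeta^*(n+1,\underbrace{1,\dots,1}_{k})$; the required matching is therefore a nontrivial identity, not bookkeeping. It is, in effect, the paper's Theorem 2.1, which is obtained there by an explicit evaluation of $\sum_{r=0}^{k-2}(-1)^r z_{k-r}\shuffle z_{r+2}z_1^{n-1}$ in the shuffle algebra, an application of $\tau$, the homomorphism property of $Z$, and two invocations of Ohno's relation; in the Kaneko--Ohno paper it is the functional equation of the function $\xi_k$. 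You flag this step yourself as ``the crux,'' which is an accurate self-assessment: without it (and without the verification that the regularisation parameter $T=\zeta(1)$ drops out) there is no proof.

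One smaller logical slip should also be corrected if you pursue this route: membership in the shuffle \emph{ideal} generated by the depth-one words $x^{a-1}y$ is not equivalent to being a $\mathbb{Q}$-linear combination of shuffle products of such words. An element of that ideal is a sum of terms $x^{a-1}y\shuffle w$ with $w\in\h^0$ arbitrary, whose image under $Z$ is a sum of products $\zeta(a)\zeta(w)$ and need not lie in $\mathbb{Q}[\zeta(2),\zeta(3),\dots]$. The correct target is membership in the shuffle \emph{subalgebra} generated by the depth-one words. This does not derail the strategy, but the ``i.e.'' in your first reduction is false as stated.
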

	
Due to the conjecture that all relations among multiple zeta values will follow 
from regularized double shuffle relation \cite{30}, a good number of results in the study of multiple zeta values and multiple zeta star values, have been proved using algebraic approach, namely, sum formula, cyclic sum formula, duality of multiple zeta values etc. Also, the literature lacks the algebraic proof of duality of multiple zeta star values (Theorem \ref{T2}), though  the result has been proved already in \cite{34}, by M. Kaneko and Y. Ohno, using an integral function defined by T. Arakawa and M. Kaneko \cite{101} and proved in \cite{35},  using a generating function for sums of multiple zeta values studied by Aoki, Kombu and Ohno \cite{100}. The purpose of this article is to provide an alternative proof of Theorem \ref{T2} using an algebraic approach. In particular, Theorem 1.2 has been proved with the help of shuffle algebra.	
	
	Now we state an ingredient of our proof of Theorem \ref{T2}, namely, Ohno's relation  which generalizes simultaneously the sum formula \cite{10} and the duality theorem:
	
	\begin{theorem} \cite{36}
		\label{ohn}
		Let $(k_1, \dots k_n)$ be an admissible index and $(k'_1, \dots k'_{n'})$ the dual index of $(k_1, \dots k_n)$. Then
		\begin{align*}\
			\sum_{\substack{l_1+ \dots +l_n=l \\l_i \geq 0}} \zeta(k_1+l_1,\dots ,k_n+l_n)&=\sum_{ \substack{l_1+ \dots +l_{n'}=l \\l_i \geq 0}} \zeta(k'_1+l_1,\dots ,k'_{n'}+l_{n'})
		\end{align*}
		holds for any integer $l \geq 0$.
	\end{theorem}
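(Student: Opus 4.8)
The plan is to deduce Ohno's relation from a single generating-function identity that packages all values of $l$ at once, and then to establish that identity by the same change of variables $t\mapsto 1-t$ that underlies the duality formula (Theorem \ref{thm1}).

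First I would introduce the generating function
$$\Phi_{\k}(x)=\sum_{l\ge 0}x^{l}\sum_{\substack{l_1+\dots+l_n=l\\ l_i\ge 0}}\zeta(k_1+l_1,\dots,k_n+l_n).$$
Expanding each $\zeta$ as its nested sum and summing the geometric series in each variable $m_i$ gives, for $|x|<1$, the closed form
$$\Phi_{\k}(x)=\sum_{m_1>\dots>m_n\ge 1}\ \prod_{i=1}^{n}\frac{1}{m_i^{\,k_i-1}(m_i-x)},$$
where the admissibility $k_1\ge 2$ guarantees convergence. The coefficient of $x^{l}$ is exactly the left-hand side of the asserted relation, and likewise the coefficient of $x^{l}$ in $\Phi_{\k'}(x)$ is its right-hand side. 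Hence Ohno's relation is equivalent to the single identity $\Phi_{\k}(x)=\Phi_{\k'}(x)$, and it suffices to prove this.

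Next I would give $\Phi_{\k}(x)$ an iterated-integral representation of weight $w=\mathrm{wt}(\k)$ over the simplex $1>t_1>\dots>t_w>0$. Using $\tfrac{1}{m-x}=\int_0^1 t^{\,m-1-x}\,dt$ to absorb the factors $(m_i-x)^{-1}$, together with the usual expansions of $\tfrac{dt}{1-t}$ and $\tfrac{dt}{t}$ to recover the nested sum, the underlying word attached to $\k$ is the familiar one, each part $k_i$ contributing $\omega_0^{\,k_i-1}\omega_1$ with $\omega_0=\tfrac{dt}{t}$ and $\omega_1=\tfrac{dt}{1-t}$, but now carrying an $x$-dependent deformation of the differential forms. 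The closing step is to apply the involution $t\mapsto 1-t$, which reverses the word and exchanges $\omega_0\leftrightarrow\omega_1$; since the reversal-and-swap of the word of $\k$ is by definition the word of $\k'$, this transforms the integral for $\Phi_{\k}(x)$ into that for $\Phi_{\k'}(x)$, and equating coefficients of $x^{l}$ yields the theorem.

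The main obstacle is this last step: the naive deformation places the factor $t^{-x}$ on only one form per block, which is not symmetric under $t\mapsto 1-t$, so I expect the real work to lie in choosing a deformation of the forms that both reproduces $\Phi_{\k}(x)$ and is manifestly invariant under the substitution. An attractive alternative that avoids the integral entirely is the connected-sum method: one introduces a double series $Z_x(\k;\k')$ joining the two nested sums through a symmetric connector $\mathcal{C}_x(m,n)=\mathcal{C}_x(n,m)$ built from a Beta-type factor, proves two transport relations that move indices from one side to the other by partial fractions and the connector's recurrence, and then reads off $\Phi_{\k}(x)=\Phi_{\k'}(x)$ from the symmetry of $\mathcal{C}_x$. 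In that route the difficulty shifts to verifying the transport relations for the $x$-deformed connector.
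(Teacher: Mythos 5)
First, a point of calibration: the paper does not prove Theorem \ref{ohn} at all. It is quoted as a known result from Ohno's paper \cite{36} and used as a black box in the proof of Theorem \ref{T105} (specifically to obtain \eqref{e63} and the reduction $\zeta(2,1,\dots,1)=\zeta(k-r)$). So there is no internal argument to compare your proposal against; what you have written is an attempt at an independent proof of Ohno's relation itself, and it has to be judged on those terms.

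On those terms, the first half of your proposal is correct and the second half has a genuine gap. The reduction is fine: resumming the geometric series in each $m_i$ does give $\Phi_{\mathbf{k}}(x)=\sum_{m_1>\dots>m_n\ge 1}\prod_{i=1}^{n} m_i^{-(k_i-1)}(m_i-x)^{-1}$, and Ohno's relation is precisely the coefficientwise statement of $\Phi_{\mathbf{k}}(x)=\Phi_{\mathbf{k}'}(x)$. But that single identity is the entire theorem, and neither of your two routes to it is actually carried out. The first route fails as described: inserting the $l_i$ extra factors of $dt/t$ at the head of each block and resumming does not merely decorate one form per block with $t^{-x}$ --- the $x$-dependence propagates through every subsequent integration in the block, and the resulting family of deformed forms is not carried to the corresponding family for $\mathbf{k}'$ by $t\mapsto 1-t$ together with word reversal. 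You acknowledge this obstacle yourself, but acknowledging it is not overcoming it; as written, the substitution argument proves only the constant term in $x$, i.e.\ the ordinary duality of Theorem \ref{thm1}, and nothing about the higher coefficients. The second route (a connected double sum joined by a symmetric $x$-deformed Beta-type connector, in the style of Seki--Yamamoto) does lead to a complete proof, but there the two transport relations \emph{are} the proof: they encode exactly the partial-fraction inductions of Ohno's original argument, and you have only named them, not stated or verified them. To close the gap you must either exhibit an $x$-deformation of the iterated integral that is manifestly symmetric under the involution, or write down the deformed connector explicitly and check its transport identities; until one of these is done, the central identity $\Phi_{\mathbf{k}}(x)=\Phi_{\mathbf{k}'}(x)$ remains an assertion rather than a theorem.
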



	\textbf{Shuffle Algebra:}	
	\cite{31,32} Let $A=\{x,y\}$ be an alphabet which contains two noncommutative letters. Denote by $A^*$  the set of all words generated by the letters in A and the empty word 1. The subset of the nonempty words of $A^*$ is denoted by $A^+$.
	Let $\hh=\mathbb{Q}<A>$ be the $\mathbb{Q}$-algebra of noncommutative polynomials on $x$ and $y$ with coefficients in $\mathbb{Q}$. Then $A^*$ is a basis of the $\mathbb{Q}$-vector space $\hh$. Define two subalgebras of $\hh$,~$$\hh^1=\mathbb{Q}+\hh y,~ \hh^0= \mathbb{Q}+x\hh y$$ which have the  $\mathbb{Q}$-basis
	$$z_{k_1} \dots z_{k_n}, n \geq 0, k_1, \dots , k_n \geq 1$$
	and $$z_{k_1} \dots z_{k_n}, n \geq 0, k_1 \geq 2, k_2, \dots , k_n \geq 1,$$
	respectively, where $z_k=x^{k-1}y$ for any positive integer $k$.

		Define the shuffle product $\SH$ on $\hh$ by $ \mathbb{Q}$-bilinearities and the axioms: \\
		(S1) $1 \SH w =w \SH 1 =w,$  \hspace{3em} ($\forall w \in A^*$);\\
		(S2) $aw_1 \SH bw_2 = a(w_1 \SH bw_2) + b(aw_1 \SH  w_2) $, ~ ( $\forall  a,b \in{A},\forall w_1,w_2 \in A^*$).
		

  The product $\SH$ is commutative and associative. Under this product $\SH$, $\hh$ becomes a commutative algebra, and  $\hh^1$ and
	$\hh^0$ are its subalgebras. These commutative algebras are called shuffle  algebras, and denoted by   $\hh_{\SH}$,  $\hh^1_{\SH}$ and  $\hh^0_{\SH}$, respectively.
	Let $\sigma$ be an automorphism of the noncommutative algebra $\hh$, determined by  $$\sigma(x) = x,~\sigma(y)=x+y.$$
	The inverse map $\sigma^{-1}$ is determined by
	$$\sigma^{-1}(x) = x,~\sigma^{-1}(y)=-x+y.$$\\
	Note that  $\sigma$  and  $\sigma^{-1}$ are also automorphisms of the shuffle algebra $\hh_{\SH}$.
	Now, define the $\mathbb{Q}$-linear map $S:\hh \rightarrow \hh$ by $S(1)=1$ and
	$$S(wa)=\sigma(w)a,~\text{(for all } w\in{A^*,\text{for all }a \in{A})}.$$
	Note that $S$ is invertible, and $S^{-1}(1)=1$ and
	$$S^{-1}(wa)=\sigma^{-1}(w)a,~\text{(for all } w\in{A^*,\text{for all }a \in{A})}.$$
	
	Now for any $\mathbb{Q}$-linear map $Z:\hh^0 \rightarrow \mathbb{R}$, define
	$$Z^*=Z \circ S : \hh^0 \rightarrow \mathbb{R},$$
	which is also a $\mathbb{Q}$-linear map.

	Then for any admissible index $\k=(k_1,\dots,k_n)$, the multiple zeta value $\z(\k)$ and the multiple zeta-star value $\z^*(\k)$ associated with the map $Z$ are respectively defined by
	$$\z(\k)=Z(z_{k_1}\dots z_{k_n}),~\z^*(\k)=Z^*(z_{k_1}\dots z_{k_n}).$$
If $\k$ is an empty index, then we define
	$$\z(\k)=\z^*(\k)=Z(1)=1.$$

	The $\mathbb{Q}$-linear map $Z:\hh^0 \rightarrow \mathbb{R}$ is an algebra homomorphism with respect to the product $\SH$  i.e for any $w,v \in \hh^0$, $Z$ satisfies the following
	$$Z(w \shuffle v)=Z(w)Z(v).$$
	The above relation is called (finite) double shuffle relation.
	
	We further need to define two functions which are essential for our proof of Theorem 1.2. For any nonnegative integer $m$, the  $\mathbb{Q}$-linear map $\sigma_m:\hh^1 \rightarrow \hh^1$ is defined by $\sigma_m(1)=1$, and
	$$\sigma_m(z_{k_1} \dots z_{k_n})=\sum_{ \substack{a_1+a_2+ \dots +a_n=m \\a_i \geq 0}} z_{k_1+a_1} \dots z_{k_n+a_n}, (\forall n,k_1, \dots , k_n \in \mathbb{N}).$$
	
	Let $\tau$ denotes an antiautomorphism of the noncommutative algebra $\hh$ determined by
	$$\tau(x)=y, \hspace{2em} \tau(y)=x.$$
	Then $\tau$ is an automorphism of the shuffle algebra $\hh_{\SH}$ and also $\tau(\k)=\k'$, where $\k$ and $\k'$ are dual indices.

	

\vspace{1em}	
\section{Proof of Theorem \ref{T2} }	

	We first recall the formula of the generating function of multiple zeta values of height one given by Aomoto (\cite{102}) and Drinfeld(\cite{103}),
		\begin{align}
			\sum_{k,n \geq 1}\zeta(k+1,\underbrace{1, \dots ,1}_{n-1})x^ky^n&=1-\dfrac{\Gamma(1-x)\Gamma(1-y)}{\Gamma(1-x-y)},
		\end{align}
		and the Taylor's expansion of the gamma function
		\begin{align}
			\Gamma(1+x)=exp\bigg(-\gamma x+\sum_{n=2}^{\infty}{(-1)}^n\dfrac{\zeta(n)}{n}x^n\bigg) \hspace{1cm}(|x|<1, \gamma: \text{Euler's constant}).
		\end{align}
		By (1) and (2), it follows that all multiple zeta values of height one can be written as polynomials over $\mathbb{Q}$ in the Riemann zeta values.
On account of this observation, to prove Theorem 1.2, it is sufficient to deduce the following:
\begin{theorem} For any integers $k,n \geq 1$, we have
		\label{T105}
		\begin{align*}
			&{(-1)}^{k}\zeta^*(k+1,\underbrace{1, \dots ,1}_{n})-{(-1)}^{n}\zeta^*(n+1,\underbrace{1, \dots ,1}_{k})\\
			&=k\zeta({k+2,\underbrace{1, \dots, 1}_{n-1}})-n\zeta({n+2,\underbrace{1, \dots, 1}_{k-1}}) \\
			& \ \ \ \ + {(-1)}^{k}\sum_{r=0}^{k-2}{(-1)}^r\zeta({k-r})\zeta({n+1,\underbrace{1, \dots, 1}_{r}}) \\
			& \ \ \ \ - {(-1)}^{n}\sum_{r=0}^{n-2}{(-1)}^r\zeta({n-r})\zeta({k+1,\underbrace{1, \dots, 1}_{r}}).
		\end{align*}
	\end{theorem}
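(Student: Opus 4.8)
The plan is to move the entire identity inside the map $Z$ and to work within the shuffle algebra $\mathfrak{H}^{0}_{\shuffle}$. First I would turn each star value into a genuine $Z$-value of an explicit word. Since $z_{k+1}z_{1}^{n}=x^{k}y^{n+1}$ and $S(wy)=\sigma(w)\,y$ with $\sigma(y)=x+y$, I obtain
$$\zeta^{*}(k+1,\underbrace{1,\dots,1}_{n})=Z^{*}(x^{k}y^{n+1})=Z\bigl(\sigma(x^{k}y^{n})\,y\bigr)=Z\bigl(x^{k}(x+y)^{n}y\bigr).$$
Expanding the noncommutative power $(x+y)^{n}$ realises the left-hand side of the theorem as $Z(\lambda_{k,n})$ for the single, completely explicit element $\lambda_{k,n}=(-1)^{k}x^{k}(x+y)^{n}y-(-1)^{n}x^{n}(x+y)^{k}y\in\mathfrak{H}^{0}$.

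Next I would cast the right-hand side in the same form. The products are absorbed by the finite double shuffle relation $Z(w)Z(v)=Z(w\shuffle v)$: from $\zeta(k-r)=Z(z_{k-r})$ and $\zeta(n+1,1^{r})=Z(z_{n+1}z_{1}^{r})$ one gets $\zeta(k-r)\zeta(n+1,1^{r})=Z\bigl(z_{k-r}\shuffle z_{n+1}z_{1}^{r}\bigr)$, and symmetrically for the second sum; the two diagonal terms are $Z(k\,z_{k+2}z_{1}^{n-1})$ and $Z(n\,z_{n+2}z_{1}^{k-1})$. All of these words lie in $\mathfrak{H}^{0}$, which is a shuffle subalgebra, so the right-hand side becomes $Z(\rho_{k,n})$ for an explicit $\rho_{k,n}\in\mathfrak{H}^{0}$. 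The theorem is thereby reduced to the single equality $Z(\lambda_{k,n})=Z(\rho_{k,n})$.

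The core of the argument is to prove this equality, and the guiding warning is that it is genuinely an identity of \emph{$Z$-values}, not of words: $\lambda_{k,n}$ and $\rho_{k,n}$ do not coincide in $\mathfrak{H}^{0}$. Already for $k=2,\,n=1$ one computes $\lambda_{2,1}-\rho_{2,1}=z_{2}z_{1}z_{1}-z_{3}z_{1}-z_{2}z_{2}$, which is nonzero in $\mathfrak{H}^{0}$ but lies in $\ker Z$, because $Z(z_{2}z_{1}z_{1})=\zeta(4)$ (the $l=0$, i.e.\ duality, instance of Theorem \ref{ohn} for the pair $(2,1,1)\leftrightarrow(4)$) while $Z(z_{3}z_{1}+z_{2}z_{2})=\zeta(4)$ (the $l=1$ instance for the pair $(3)\leftrightarrow(2,1)$). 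This is exactly how Ohno's relation must enter: after expanding $\zeta^{*}(k+1,1^{n})=Z(x^{k}(x+y)^{n}y)$ as the sum of the $2^{n}$ multiple zeta values of weight $k+n+1$ produced by all mergings of the trailing $1$'s, and expanding each shuffle product, the surviving discrepancy $\lambda_{k,n}-\rho_{k,n}$ has to be exhibited as a $\mathbb{Q}$-linear combination of Ohno relations for the height-one dual pairs $(k+1,1^{n-1})\leftrightarrow(n+1,1^{k-1})$ and their companions, all of total weight $k+n+1$, which are then killed by Theorem \ref{ohn}.

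I expect the main obstacle to be precisely this combinatorial reconciliation. The case $k=3,\,n=1$ already shows how delicate it is: there the coefficient of the word $z_{k+n+1}$ in $\lambda_{k,n}$ equals $(-1)^{k}-(-1)^{n}$, which need not match the multiplier $k$ carried by the diagonal term of $\rho_{k,n}$, so the correct count of $\zeta(k+n+1)$ is recovered only after the depth $\ge 2$ contributions are folded in by Ohno's relation together with Euler-type reductions such as $\zeta(2,1)=\zeta(3)$. To keep the bookkeeping tractable I would organise the $2^{n}$ merge-terms by depth and by the length of the leading $x$-block, so that each resulting packet is recognisable as one application of Theorem \ref{ohn}, and then proceed by induction on $k+n$ (or through a generating-function encoding of the raising sums) to pin down the integer multipliers $k$ and $n$. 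A final point of care is to keep every intermediate expression inside $\mathfrak{H}^{0}$: auxiliary words such as $(x+y)^{m}y$ are not admissible, so the rewriting must be arranged so that $Z$ is only ever applied to elements of $\mathfrak{H}^{0}$ and no shuffle regularisation is required.
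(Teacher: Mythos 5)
Your setup is sound and matches the paper's starting point: writing $\zeta^{*}(k+1,1^{n})=Z(x^{k}(x+y)^{n}y)$ is exactly the paper's Lemma \ref{lem1}, converting the products $\zeta(k-r)\zeta(n+1,1^{r})$ into shuffle products via $Z(w\shuffle v)=Z(w)Z(v)$ is how the paper proceeds (it uses $z_{2}z_{1}^{k-r-2}\shuffle z_{n+1}z_{1}^{r}$ and the duality $\zeta(2,1^{k-r-2})=\zeta(k-r)$, but your $z_{k-r}\shuffle z_{n+1}z_{1}^{r}$ is the $\tau$-image of the word the paper actually shuffles, so this is the same computation), and your diagnosis that $\lambda_{k,n}-\rho_{k,n}$ is nonzero in $\mathfrak{H}^{0}$ and must be killed by Ohno-type relations is correct --- your $k=2$, $n=1$ check is accurate, and the paper's final step is indeed a single application of Theorem \ref{ohn} with $l=1$ to the dual pair $(k+1,1^{n-1})\leftrightarrow(n+1,1^{k-1})$.

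However, the proposal stops exactly where the proof begins. The entire content of the argument is the ``combinatorial reconciliation'' you defer: you must produce, for general $k,n$, an explicit identity of the shape
\begin{align*}
\zeta^{*}(k+1,\underbrace{1,\dots,1}_{n})&=(-1)^{k-1}\Bigl[\zeta(n+1,2,1^{k-2})+\cdots+\zeta(n+1,1^{k-2},2)+(n+1)\zeta(n+2,1^{k-1})\Bigr]\\
&\quad+\sum_{r=0}^{k-2}(-1)^{r}\zeta(k-r)\,\zeta(n+1,\underbrace{1,\dots,1}_{r}),
\end{align*}
after which antisymmetrizing in $(k,n)$ and one use of Ohno's relation at $l=1$ gives the theorem. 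Establishing this requires two nontrivial pieces that your plan only gestures at: (i) an identity expressing $\zeta^{*}(k+1,1^{n})$ as $\sum_{a_{1}+\cdots+a_{k}=n}(a_{k}+1)\zeta(a_{k}+2,a_{1}+1,\dots,a_{k-1}+1)$, which the paper gets by combining the expansion of $(x+y)^{n}$ with the duality automorphism $\tau$ and the raising operators $\sigma_{m}$ (Proposition \ref{p}); and (ii) a closed-form evaluation of the alternating sum of shuffle products $\sum_{r=0}^{k-2}(-1)^{r}\,z_{k-r}\shuffle z_{r+2}z_{1}^{n-1}$, which occupies the bulk of the paper (Lemma \ref{lemma4} and the telescoping of binomial coefficients via $\sum_{v}(-1)^{v}\binom{l-1}{v}=0$). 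Phrases such as ``organise the $2^{n}$ merge-terms by depth,'' ``proceed by induction on $k+n$,'' and ``pin down the integer multipliers $k$ and $n$'' name the difficulty without resolving it; in particular nothing in the proposal explains where the coefficients $k$ and $n$ on the diagonal terms, or the alternating signs $(-1)^{r}$ in the double sums, actually come from. As written this is a correct reduction and a plausible research plan, not a proof.
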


We begin with the following lemma.	
	
	 	\begin{lemma}
		\label{lem1}
		For any integers $k,n \geq 1$,
		\begin{align}\label{3}S(z_{k+1}z_1^n)=\sum_{t=1}^{n+1}\sum_{ \substack{a_1+a_2+ \dots +a_t=n+1-t \\a_i \geq 0, i=1, 2, \dots, t}} z_{a_t+k+1}z_{a_1+1}z_{a_2+1} \dots z_{a_{t-1}+1}.\end{align}
	\end{lemma}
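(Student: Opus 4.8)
The plan is to translate the statement into the letters $x$ and $y$, compute $S$ and $\sigma$ explicitly, and then match a direct expansion with the right-hand side term by term.

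First I would rewrite the argument of $S$. Since $z_1=y$ and $z_{k+1}=x^ky$, the word is $z_{k+1}z_1^n=x^ky^{n+1}$, whose last letter is $y$. Peeling off this last letter and applying the defining relation $S(wa)=\sigma(w)a$ with $w=x^ky^{n}$ and $a=y$ gives $S(z_{k+1}z_1^n)=\sigma(x^ky^{n})\,y$. Because $\sigma$ is an algebra automorphism with $\sigma(x)=x$ and $\sigma(y)=x+y$, it is multiplicative, so $\sigma(x^ky^{n})=x^k(x+y)^{n}$, and therefore $S(z_{k+1}z_1^n)=x^k(x+y)^{n}y$.

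Next I would expand $(x+y)^n$ as the sum of all $2^n$ words $w$ of length $n$ in $x,y$, giving $x^k(x+y)^ny=\sum_w x^kwy$. I would organize this sum by the number $j$ of occurrences of $y$ in $w$, writing $w=x^{b_0}yx^{b_1}y\cdots yx^{b_j}$ with $b_0,\dots,b_j\ge 0$ and $b_0+\dots+b_j=n-j$. For such a $w$, the word $x^kwy$ has exactly $j+1$ occurrences of $y$, and splitting it at each $y$ yields its unique decomposition into letters $z_m=x^{m-1}y$, namely $x^kwy=z_{k+b_0+1}\,z_{b_1+1}\,z_{b_2+1}\cdots z_{b_j+1}$.

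Finally I would put $t=j+1$ and compare this with the summand $z_{a_t+k+1}z_{a_1+1}\cdots z_{a_{t-1}+1}$ on the right-hand side through the relabeling $a_t=b_0$ and $a_i=b_i$ for $1\le i\le t-1$. This is a cyclic shift of indices, hence a bijection between the weak compositions $(b_0,\dots,b_{t-1})$ of $n+1-t$ into $t$ nonnegative parts and the index set $\{(a_1,\dots,a_t):a_i\ge 0,\ a_1+\dots+a_t=n+1-t\}$, and it carries each word to the matching word; summing over all $w$ (equivalently, over $t=1,\dots,n+1$ and over these compositions) then reproduces the right-hand side. I do not expect a genuine obstacle, only careful bookkeeping in this last step: tracking that the prefix $x^k$ is absorbed into the first $z$-letter (so that the shift $+k$ lands on the index attached to $b_0=a_t$), and that the cyclic relabeling is precisely what turns the natural left-to-right labeling of the expansion into the labeling used in the statement.
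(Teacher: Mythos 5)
Your argument is correct and follows essentially the same route as the paper: both reduce to $S(z_{k+1}z_1^n)=x^k(x+y)^ny$ via the definition of $S$ and the multiplicativity of $\sigma$, then expand $(x+y)^n$ over words grouped by the number of $y$'s (the paper phrases this grouping as $\sum_t x^{n+1-t}\shuffle y^{t-1}$, you enumerate the $2^n$ words directly, which is the same thing) and perform the identical cyclic relabeling that sends the leading block of $x$'s to the index $a_t$. No gaps; the bookkeeping you flag is exactly the step the paper also carries out.
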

	
	\begin{proof}
		By the definition of the $\mathbb{Q}$-linear map $S$, we have
		
		\begin{align*}
			S(z_{k+1}z_1^n)&=S(x^ky^{n+1}) \nonumber \\  &=\sigma(x^ky^n)y \nonumber \\ &=x^k{(x+y)}^ny \nonumber \\ &=x^k(\sum_{t=1}^{n+1} (x^{n+1-t} \shuffle y^{t-1}))y \nonumber \\
			&=\sum_{t=1}^{n+1} x^k \big(\sum_{ \substack{a_1+a_2+ \dots +a_t=n+1-t \\a_i \geq 0, i=1, 2, \dots, t}} x^{a_t}y x^{a_1}yx^{a_2}y \dots x^{a_{t-1}}\big)y \nonumber \\
			&=\sum_{t=1}^{n+1}\sum_{ \substack{a_1+a_2+ \dots +a_t=n+1-t \\a_i \geq 0, i=1, 2, \dots, t}}x^{a_t+k}y x^{a_1}yx^{a_2}y \dots x^{a_{t-1}}y \nonumber \\
			&=\sum_{t=1}^{n+1}\sum_{ \substack{a_1+a_2+ \dots +a_t=n+1-t \\a_i \geq 0, i=1, 2, \dots, t}} z_{a_t+k+1}z_{a_1+1}z_{a_2+1} \dots z_{a_{t-1}+1}.
		\end{align*}

	\end{proof}
	
	Applying $\mathbb{Q}$-linear map $Z$ on both sides of (\ref{3}), we get the following proposition.
	
	\begin{proposition}
		\label{prop1}
		For any integers $k,n \geq 1$,
		$$\zeta^*(k+1,\underbrace{1, \dots ,1}_{n})=\sum_{t=1}^{n+1}\sum_{ \substack{a_1+a_2+ \dots +a_t=n+1-t \\a_i \geq 0, i=1, 2, \dots, t}} \zeta(a_t+k+1,a_1+1,a_2+1, \dots , a_{t-1}+1).$$
	\end{proposition}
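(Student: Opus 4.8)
The plan is to read off this Proposition as the image of Lemma \ref{lem1} under the evaluation map $Z$, so that essentially no new work is required beyond an honest bookkeeping check. The pivot is the definition $Z^{*}=Z\circ S$, which specializes on the height-one word $z_{k+1}z_1^{n}$ to
$$\z^{*}(k+1,\underbrace{1,\dots,1}_{n})=Z^{*}(z_{k+1}z_1^{n})=Z\bigl(S(z_{k+1}z_1^{n})\bigr).$$
Hence the left-hand side of the Proposition is exactly $Z$ applied to the left-hand side of (\ref{3}), and the strategy is to apply $Z$ to the right-hand side of (\ref{3}) and then identify each resulting term as a multiple zeta value in the $\z(\cdots)$ notation.

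First I would substitute the closed form for $S(z_{k+1}z_1^{n})$ supplied by Lemma \ref{lem1} and then invoke the $\mathbb{Q}$-linearity of $Z$ to move it inside the finite double sum over $t$ and over the nonnegative compositions $a_1+\dots+a_t=n+1-t$. This reduces the claim to evaluating $Z$ term by term on each word $z_{a_t+k+1}z_{a_1+1}\cdots z_{a_{t-1}+1}$. By the definition $\z(\k)=Z(z_{k_1}\cdots z_{k_n})$, each such evaluation becomes
$$Z(z_{a_t+k+1}z_{a_1+1}\cdots z_{a_{t-1}+1})=\z(a_t+k+1,a_1+1,\dots,a_{t-1}+1),$$
and reassembling the finite sum then produces precisely the asserted identity.

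The only point that warrants care, and the step I would flag as the sole (and quite mild) obstacle, is confirming that $Z$ is legitimately defined on every term, i.e. that each word $z_{a_t+k+1}z_{a_1+1}\cdots z_{a_{t-1}+1}$ lies in the domain $\hh^{0}$ of $Z$. This is just the admissibility condition on the index, and it holds automatically: the leading entry is $a_t+k+1\geq 2$ since $k\geq 1$ and $a_t\geq 0$, so the word begins with $x^{a_t+k}y$ where $a_t+k\geq 1$, and therefore sits in $x\hh y\subseteq\hh^{0}$. Once this is noted, the argument is complete; all of the combinatorial substance is already concentrated in Lemma \ref{lem1}, and the Proposition is merely its translation under the linear evaluation $Z$.
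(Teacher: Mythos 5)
Your proposal is correct and is essentially identical to the paper's own (one-line) proof: the paper simply applies the $\mathbb{Q}$-linear map $Z$ to both sides of \eqref{3} and reads off the result via $Z^{*}=Z\circ S$. Your additional check that each word $z_{a_t+k+1}z_{a_1+1}\cdots z_{a_{t-1}+1}$ is admissible (hence in $\hh^{0}$) is a reasonable bit of diligence that the paper leaves implicit.
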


Next we have the following lemma.	
	
	\begin{lemma}\label{lem2}
		For any integers $k,n \geq 1$,
		\begin{align}\label{4}
			\sum_{ \substack{a_1+a_2+ \dots +a_k=n \\a_i \geq 0, i=1, 2, \dots, k}} (a_k+1) z_{a_k+2}z_{a_1+1}z_{a_2+1} \dots z_{a_{k-1}+1} 		
			&=\sum_{t=1}^{n+1}\sum_{ \substack{a_1+a_2+ \dots +a_k=n+1-t \\a_i \geq 0, i=1, 2, \dots, k}} z_{a_k+t+1}z_{a_1+1}z_{a_2+1} \dots z_{a_{k-1}+1}.
		\end{align}
	\end{lemma}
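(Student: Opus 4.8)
The plan is to establish the identity by comparing the two sides directly as elements of $\hh$, both being $\mathbb{Q}$-linear combinations of words of the shape $z_{b}\,z_{a_1+1}z_{a_2+1}\cdots z_{a_{k-1}+1}$. The most transparent route is to reindex the right-hand side rather than to argue by induction on $n$. The key observation is that the leading letter on the right, $z_{a_k+t+1}$, interacts with the defining constraint $a_1+\cdots+a_k=n+1-t$ in such a way that the dependence on $t$ can be absorbed into a single new summation variable, after which the two sides become visibly equal up to a count of multiplicities.

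Concretely, first I would interchange the order of summation so that, for each fixed $t$, the sum runs over the compositions $(a_1,\dots,a_k)$. Then I would introduce the substitution $b=a_k+t-1$, which turns the leading index into $a_k+t+1=b+2$ and rewrites the constraint $a_1+\cdots+a_{k-1}+a_k=n+1-t$ as $a_1+\cdots+a_{k-1}+b=n$, an equation that no longer involves $t$. Under this change of variables the generic word on the right becomes $z_{b+2}\,z_{a_1+1}\cdots z_{a_{k-1}+1}$, which is exactly the type of word occurring on the left once $b$ is renamed $a_k$.

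It then remains to count multiplicities. For a fixed tuple $(a_1,\dots,a_{k-1},b)$ with $a_1+\cdots+a_{k-1}+b=n$, the condition $a_k=b-t+1\geq 0$ together with $t\geq 1$ forces $t$ to range over $1,2,\dots,b+1$, so the word $z_{b+2}\,z_{a_1+1}\cdots z_{a_{k-1}+1}$ is produced exactly $b+1$ times. Summing over all such tuples and replacing $b$ by $a_k$ yields $\sum (a_k+1)\,z_{a_k+2}z_{a_1+1}\cdots z_{a_{k-1}+1}$, which is precisely the left-hand side of \eqref{4}.

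The step requiring the most care is this multiplicity bookkeeping: one must verify that as $(a_1,\dots,a_k)$ and $t$ range over the index set of the right-hand side, the induced data consisting of the new tuple $(a_1,\dots,a_{k-1},b)$ and the parameter $t$ sweep out precisely the range $1\le t\le b+1$, with no overcounting and with the correct behaviour in the boundary cases (for instance $b=0$, where only $t=1$ contributes). Checking that the substitution $b=a_k+t-1$ is a bijection between the two index sets, and that the upper bound $t\le b+1$ is exactly the image of the admissibility constraint $a_k\ge 0$, is the crux of the argument; once this is confirmed the identity follows immediately by collecting equal words.
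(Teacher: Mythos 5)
Your proof is correct and is essentially the paper's own argument read in the opposite direction: the paper expands the left-hand side according to the value of $a_k$ and regroups the $(a_k+1)$ copies of each word into the sum over $t$, which is exactly your substitution $b=a_k+t-1$ together with the multiplicity count $1\le t\le b+1$. Nothing further is needed.
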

	
	\begin{proof}
		We have,
		\begin{align*}
			L.H.S&=\sum_{ \substack{a_1+a_2+ \dots +a_k=n \\a_i \geq 0, i=1, 2, \dots, k}} (a_k+1) z_{a_k+2}z_{a_1+1}z_{a_2+1} \dots z_{a_{k-1}+1}
		\end{align*}	
		Putting $a_k=0, 1, \dots , n$, we get
		\begin{align*}
			\label{2}
			L.H.S&=z_2\sum_{ \substack{a_1+ \dots +a_{k-1}=n \\a_i \geq 0, i=1, 2, \dots, k}}z_{a_1+1} \dots z_{a_{k-1}+1}+2z_3\sum_{ \substack{a_1+ \dots +a_{k-1}=n-1 \\a_i \geq 0, i=1, 2, \dots, k}}z_{a_1+1} \dots z_{a_{k-1}+1} \nonumber \\
			& \ \ \ \ + 3z_4\sum_{ \substack{a_1+ \dots +a_{k-1}=n-2 \\a_i \geq 0, i=1, 2, \dots, k}}z_{a_1+1} \dots z_{a_{k-1}+1}	+ \dots +nz_{n+1}\sum_{ \substack{a_1+ \dots +a_{k-1}=1 \\a_i \geq 0, i=1, 2, \dots, k}}z_{a_1+1} \dots z_{a_{k-1}+1} \nonumber \\
			& \ \ \ \ +(n+1)z_{n+2}\sum_{ \substack{a_1 \dots +a_{k-1}=0 \\a_i \geq 0, i=1, 2, \dots, k}}z_{a_1+1} \dots z_{a_{k-1}+1} \nonumber \\
			&=\sum_{ \substack{a_1+ \dots +a_{k}=n \\a_i \geq 0, i=1, 2, \dots, k}}z_{a_k+2}z_{a_1+1} \dots z_{a_{k-1}+1}+\sum_{ \substack{a_1+ \dots +a_{k}=n-1 \\a_i \geq 0, i=1, 2, \dots, k}}z_{a_k+3}z_{a_1+1} \dots z_{a_{k-1}+1} \nonumber \\
			& \ \ \ \ + \sum_{ \substack{a_1+ \dots +a_{k}=n-2 \\a_i \geq 0, i=1, 2, \dots, k}}z_{a_k+4}z_{a_1+1} \dots z_{a_{k-1}+1}	+ \dots +\sum_{ \substack{a_1+\dots +a_{k}=1 \\a_i \geq 0, i=1, 2, \dots, k}}z_{a_k+n+1}z_{a_1+1} \dots z_{a_{k-1}+1} \nonumber \\
			& \ \ \ \ +\sum_{ \substack{a_1+ \dots +a_{k}=0 \\a_i \geq 0, i=1, 2, \dots, k}}z_{a_k+n+2}z_{a_1+1} \dots z_{a_{k-1}+1} \nonumber \\
			&=\sum_{t=1}^{n+1}\sum_{ \substack{a_1+ \dots +a_k=n+1-t \\a_i \geq 0, i=1, 2, \dots, k}} z_{a_k+t+1}z_{a_1+1} \dots z_{a_{k-1}+1} \nonumber\\
			&=R.H.S
		\end{align*}	
		Hence the lemma.	
		
	\end{proof}

	\begin{proposition}
		\label{p}
		For any integers $k,n \geq 1$, we have
		\begin{align}
			\zeta^*(k+1,\underbrace{1, \dots ,1}_{n})&=\sum_{t=1}^{n+1}\sum_{ \substack{a_1+a_2+ \dots +a_k=n+1-t \\a_i \geq 0, i=1, 2, \dots, k}} \zeta(a_k+t+1,a_1+1,a_2+1, \dots , a_{k-1}+1)\nonumber\\
			&=\sum_{ \substack{a_1+a_2+ \dots +a_k=n \\a_i \geq 0, i=1, 2, \dots, k}} (a_k+1) \zeta(a_k+2,a_1+1,a_2+1, \dots ,a_{k-1}+1).
		\end{align}
	\end{proposition}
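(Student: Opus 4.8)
The plan is to establish the two asserted equalities separately, using the two results already in hand. The first equality, expressing $\zeta^*(k+1,\underbrace{1,\dots,1}_{n})$ as $\sum_{t=1}^{n+1}\sum_{a_1+\dots+a_k=n+1-t}\zeta(a_k+t+1,a_1+1,\dots,a_{k-1}+1)$, I would obtain by starting from Proposition \ref{prop1} and applying Ohno's relation (Theorem \ref{ohn}) termwise in $t$. The second equality then follows simply by applying the $\mathbb{Q}$-linear map $Z$ to the identity of Lemma \ref{lem2}.

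For the first step, fix $t$ with $1\le t\le n+1$ and look at the inner sum in Proposition \ref{prop1}, namely $\sum_{a_1+\dots+a_t=n+1-t}\zeta(a_t+k+1,a_1+1,\dots,a_{t-1}+1)$. Since we range over all nonnegative compositions of $n+1-t$ into $t$ parts, a relabelling of the summation variables shows this is precisely the left-hand side of Ohno's relation for the admissible index $(k+1,\underbrace{1,\dots,1}_{t-1})$ with parameter $l=n+1-t$. The dual of $(k+1,\underbrace{1,\dots,1}_{t-1})$ is $(t+1,\underbrace{1,\dots,1}_{k-1})$, so the right-hand side of Ohno's relation is $\sum_{a_1+\dots+a_k=n+1-t}\zeta(a_k+t+1,a_1+1,\dots,a_{k-1}+1)$, again after relabelling. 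Summing the resulting equalities over $t$ from $1$ to $n+1$ yields the first equality of the proposition.

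For the second step, Lemma \ref{lem2} is an identity in $\hh^0$: every word appearing starts with $x$ (since $a_k+2\ge 2$ and $a_k+t+1\ge 2$) and ends with $y$, so both sides lie in $x\hh y$. Applying the $\mathbb{Q}$-linear map $Z$ and using $Z(z_{b_1}\cdots z_{b_m})=\zeta(b_1,\dots,b_m)$ turns the left-hand side of \eqref{4} into $\sum_{a_1+\dots+a_k=n}(a_k+1)\zeta(a_k+2,a_1+1,\dots,a_{k-1}+1)$ and the right-hand side into $\sum_{t=1}^{n+1}\sum_{a_1+\dots+a_k=n+1-t}\zeta(a_k+t+1,a_1+1,\dots,a_{k-1}+1)$, which is exactly the intermediate expression of the proposition; this completes the argument.

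I expect the only real obstacle to be bookkeeping rather than any substantial idea. One must verify carefully that the index $(k+1,\underbrace{1,\dots,1}_{t-1})$ has dual exactly $(t+1,\underbrace{1,\dots,1}_{k-1})$ (directly from the definition of $\k'$ with $u=1$, $a_1=k$, $b_1=t$), and that the variable $a_t$ (respectively $a_k$) added to the leading entry corresponds, after relabelling, to the Ohno summation over compositions, including the degenerate cases $t=1$ (where $l=n$) and $t=n+1$ (where $l=0$, still admissible since Ohno allows $l\ge 0$). No genuinely hard analytic input is required; the entire content lies in matching the combinatorial shapes of the two sums to Ohno's relation and to Lemma \ref{lem2}.
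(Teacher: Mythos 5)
Your proposal is correct and follows essentially the same route as the paper: the first equality is obtained by applying Ohno's relation termwise in $t$ to the inner sums of Proposition \ref{prop1} (the paper phrases this via $Z(\sigma_{n+1-t}(w))=Z(\sigma_{n+1-t}(\tau(w)))$ with $w=z_{k+1}z_1^{t-1}$, whose dual word is $z_{t+1}z_1^{k-1}$, which is exactly your termwise use of Theorem \ref{ohn}), and the second equality by applying $Z$ to Lemma \ref{lem2}. No substantive difference.
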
	
	
	\begin{proof}
		From lemma \eqref{lem1}, we have
		\begin{align*}
			S(z_{k+1}z_1^n)&=\sum_{t=1}^{n+1}\sum_{ \substack{a_1+a_2+ \dots +a_t=n+1-t \\a_i \geq 0, i=1, 2, \dots, t}} z_{a_t+k+1}z_{a_1+1}z_{a_2+1} \dots z_{a_{t-1}+1}\\
			&=\sum_{t=1}^{n+1} \sigma_{n+1-t}(z_{k+1}z_1^{t-1}).
		\end{align*}
		
		Now
		\begin{align*}
			\sigma_{n+1-t}(\tau ({z_{k+1}z_1^{t-1}}))&=\sigma_{n+1-t}(z_{t+1}z_1^{k-1})\\
			&=\sum_{t=1}^{n+1}\sum_{ \substack{a_1+a_2+ \dots +a_k=n+1-t \\a_i \geq 0, i=1, 2, \dots, k}} z_{a_k+t+1}z_{a_1+1}z_{a_2+1} \dots z_{a_{k-1}+1}.
		\end{align*}
		
		Using Theorem 1.3,
		\begin{align*}
			& \hspace{3em}Z(\sigma_{n+1-t}(z_{k+1}z_1^{t-1}))=Z(\sigma_{n+1-t}(\tau ({z_{k+1}z_1^{t-1}})))\nonumber\\
			& \implies Z(S(z_{k+1}z_1^n))=Z\big(\sum_{t=1}^{n+1}\sum_{ \substack{a_1+a_2+ \dots +a_k=n+1-t \\a_i \geq 0, i=1, 2, \dots, k}} z_{a_k+t+1}z_{a_1+1}z_{a_2+1} \dots z_{a_{k-1}+1}\big)\nonumber\\
			& \implies\zeta^*(k+1,\underbrace{1, \dots ,1}_{n})=\sum_{t=1}^{n+1}\sum_{ \substack{a_1+a_2+ \dots +a_k=n+1-t \\a_i \geq 0, i=1, 2, \dots, k}} \zeta(a_k+t+1,a_1+1,a_2+1, \dots , a_{k-1}+1).
		\end{align*}
		The second equality can be easily obtained by applying the $\mathbb{Q}$-linear map $Z$ on both sides of (4).

	\end{proof}
	

	

	The following is the predominant result for our proof of Theorem 2.1 and hence of Theorem 1.2.
	\begin{lemma}
		\label{lemma4}	
		For any integers $k,n \geq 1$,
		\begin{align*}
			&\sum_{r=0}^{k-2}{(-1)}^r z_{k-r}\shuffle z_{r+2}z_1^{n-1} \nonumber \\
			&=\sum_{i=3}^{n+1} (n+2-i)\sum_{s=2}^{k}z_s \bigg[\sum_{m=2}^{k-s+2}\bigg\{\sum_{p=2}^{k-s-m+3}\sum_{j=0}^{i-4}z_{k-p-s-m+4}z_1^jz_{p}z_1^{i-4-j}+z_{k-s-m+3}z_1^{i-3}\\
			& \ \ \ \ \ \  +\sum_{p=s+m-1}^{k-1}z_{k-p}\sum_{ \substack{\a_j \geq 1;\a_j \neq p-m-s+4; \\ \a_3+\dots +\a_{i-1}=p+i-m-s}} z_{\a_3} \dots z_{\a_{i-1}}\bigg\}z_m\bigg]z_1^{n+1-i} \nonumber \\
			& \ \ \ \ +\{n+{(-1)}^k\}\sum_{\a_1=2}^{k}z_{\a_1}z_{k+2-\a_1}z_1^{n-1}
			+\{1+{(-1)}^k\}(n+1)z_{k+1} z_1^{n}.
 		\end{align*}
	\end{lemma}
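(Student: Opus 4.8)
The identity is a pure statement in the shuffle algebra $\hh$, so the plan is to expand the left-hand side directly from the axioms (S1), (S2) and to collect the resulting words by their shape. Writing $z_{k-r}=x^{k-r-1}y$ and $z_{r+2}z_1^{n-1}=x^{r+1}y^{n}$, each summand $z_{k-r}\SH z_{r+2}z_1^{n-1}$ is the interleaving of the single block $x^{k-r-1}y$ into the fixed word $x^{r+1}y^{n}$, subject to preserving the internal order of each factor. Every resulting word has weight $k+n+1$ and depth $n+1$, hence is of the form $z_{c_1}\cdots z_{c_{n+1}}$ with $\sum_i c_i=k+n+1$; this gives a finite, completely explicit target set of basis words to account for, and reduces the lemma to computing, for each such word, the coefficient it receives.

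First I would classify the shuffled words by the position into which the lone $y$ coming from $z_{k-r}$ is inserted relative to the $n$ old $y$'s of $x^{r+1}y^{n}$, and by how the $k-r-1$ extra $x$'s of $z_{k-r}$ are distributed among the available gaps. Inserting the new $y$ after the $\ell$-th old $y$ splits the word into an initial part that absorbs some $x$'s and a tail of $z_1$'s; this partition is exactly what produces the three structurally different families on the right-hand side — the generic interior insertions (the big sum indexed by $i,s,m,p,j$), the insertions that only disturb the first two blocks (the $z_{\a_1}z_{k+2-\a_1}z_1^{n-1}$ terms), and the degenerate case where all extra $x$'s pile into the leading block (the $z_{k+1}z_1^{n}$ term). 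Here the index $i$ tracks the depth position of the inserted boundary, $s$ and $m$ bound the redistributed middle portion, $p$ and $j$ encode the further spreading of $x$'s, and the side condition $\a_j\ge 1,\ \a_j\neq p-m-s+4,\ \a_3+\cdots+\a_{i-1}=p+i-m-s$ is precisely the constraint describing the admissible distributions of the remaining $x$'s among the intermediate blocks.

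The decisive step is to carry out the summation over $r$ with the alternating weights $(-1)^r$. For a fixed target word its total coefficient is a signed sum over the values of $r$ that can produce it, and the bulk of these contributions telescopes. This is what collapses a formally enormous expansion into the compact coefficients $(n+2-i)$ in the main sum and the closed prefactors $\{n+(-1)^k\}$ and $\{1+(-1)^k\}(n+1)$ in the two remaining families; in particular the parity factor $(-1)^k$ survives only as the non-cancelling endpoint $r=k-2$ of the telescoping over $r$. I would organize this by first summing over $r$ at fixed insertion data and only afterwards summing over the insertion position, so that the alternating series in $r$ is resolved before the outer bookkeeping.

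The main obstacle will be precisely this combinatorial accounting: matching each basis word $z_{c_1}\cdots z_{c_{n+1}}$ to the exact multiplicity it receives after the signed sum over $r$, and verifying that the three families together with their stated index ranges and the side condition on the $\a_j$ exhaust all such words with the correct coefficients and no double counting. A sound safeguard against indexing errors is to check the identity on the boundary cases $k=1$ (both sides vanish, since the outer sum is empty) and $n=1$ (where the interior sum is empty and only the two prefactor families remain, reducing to a shuffle of two single blocks), and to confirm weight and depth homogeneity term by term; these give strong consistency checks before committing to the full signed summation.
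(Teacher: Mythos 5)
Your outline points in the same general direction as the paper (expand the shuffle products into words and resolve the signed sum over $r$), but as it stands it has a genuine gap: the entire content of the lemma is the coefficient computation, and your plan both omits it and misattributes where the two key coefficients come from. First, the factor $(n+2-i)$ is \emph{not} produced by the alternating sum over $r$. In the paper it arises inside a single product $z_{k-r}\shuffle z_{r+2}z_1^{n-1}$, before $r$ is touched: a word whose tail is a long run of $z_1$'s is generated by several values of the formal depth index, and successively separating the terms with trailing $\a_j=1$ collapses these degeneracies into the multiplicity $(n+2-i)$ attached to the words with $\a_i\geq 2$. Second, the sum over $r$ does not telescope. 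The coefficient a word $z_{\a_1}z_{\a_2}\cdots$ receives from the $r$-th summand is a binomial coefficient, $\binom{\a_1-1}{r+1}$ (this is the explicit shuffle product formula of Li and Qin that the paper quotes, and which you would have to rederive if you expand from the axioms (S1)--(S2)); the collapse is the vanishing of the alternating binomial sum, via
\begin{align*}
\sum_{v=1}^{l-1}{(-1)}^{v-1}\binom{l-1}{v}=1,
\qquad
\sum_{r=0}^{k-2}{(-1)}^{r}\binom{k}{r+1}=1+{(-1)}^{k}.
\end{align*}
Moreover the prefactor $\{n+(-1)^k\}$ requires an additional symmetry step that your plan does not anticipate: the depth-two contribution splits into two families of binomials, $\binom{\a_1-1}{k-r-1}$ and $\binom{\a_1-1}{r+1}$, and one must first show that the signed sum of the former equals $(-1)^k$ times the signed sum of the latter (the substitution $r\mapsto k-2-r$) before the binomial identities can be applied. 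None of these three computations is present in, or correctly predicted by, your sketch.

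Your consistency checks are sound and worth keeping: for $k=1$ both sides are indeed zero (the paper dismisses this case the same way), and weight/depth homogeneity is a useful sanity check on the index ranges. But to turn the plan into a proof you would need to (i) establish the closed form of $z_{l}\shuffle z_{k_1}\cdots z_{k_{n-1}}$ with its binomial coefficients, or cite it as the paper does; (ii) perform the trailing-$z_1$ regrouping that yields $(n+2-i)$; and (iii) carry out the signed binomial summations above. The final repackaging of the main sum into the $s,m,p,j$-indexed form is then a separate (purely notational) reorganization that your proposal also leaves undescribed.
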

	
	\begin{proof}
		For $k=1$ it is obvious. For $k>1$, consider the formula,		
		$$z_l \shuffle z_{k_1}z_{k_2} \dots z_{k_{n-1}}
		=\sum_{i=1}^{n-1}\sum_{ \substack{\a_j \geq 1;\alpha_1+\alpha_2+ \dots +\alpha_{i+1}\\=l+k_1+k_2+ \dots +k_i }} \prod_{j=1}^{i-1} \binom{\alpha_j-1}{k_j-1} \binom{\alpha_i-1}{k_i-\alpha_{i+1}} z_{\alpha_1}z_{\alpha_2} \dots z_{\alpha_{i+1}}z_{k_{i+1}} \dots z_{k_{n-1}}$$~
		$$+\sum_{ \substack{a_j \geq 1;\alpha_1+\alpha_2+ \dots +\alpha_{n}\\=l+k_1+k_2+ \dots +k_{n-1} }} \prod_{j=1}^{n-1} \binom{\alpha_j-1}{k_j-1}z_{\alpha_1}z_{\alpha_2} \dots z_{\alpha_{n}},$$
		which is given in \cite{1}.
		Then, we have
		
		\begin{align}
			\label{equn1}   	
			z_{k-r} \shuffle z_{r+2}z_1^{n-1}&=\sum_{i=1}^{n}\sum_{ \substack{\a_j \geq 1;\alpha_1+\alpha_2+ \dots +\alpha_{i+1}\\=k-r+r+2+\underbrace{1+ \dots+ 1}_{i-1} }} \prod_{j=1}^{i-1} \binom{\alpha_j-1}{k_j-1} \binom{\alpha_i-1}{k_i-\alpha_{i+1}} z_{\alpha_1}z_{\alpha_2} \dots z_{\alpha_{i+1}}z_{k_{i+1}} \dots z_{k_{n}} \nonumber \\
			& \ \ \ \  +\sum_{ \substack{\a_j \geq 1;\alpha_1+\alpha_2+ \dots +\alpha_{n+1}\\=k-r+r+2+n-1}} \prod_{j=1}^{n} \binom{\alpha_j-1}{k_j-1}z_{\alpha_1}z_{\alpha_2} \dots z_{\alpha_{n+1}} \nonumber \\
			& \ \ \ \ \hspace*{4cm} [\text{where}~k_1=r+2; k_2= \dots =k_n=1] \nonumber \\
			&=\sum_{i=1}^{n}\sum_{ \substack{\a_j \geq 1;\alpha_1+\alpha_2+ \dots +\alpha_{i+1}\\=k+i+1 }} \prod_{j=1}^{i-1} \binom{\alpha_j-1}{k_j-1} \binom{\alpha_i-1}{k_i-\alpha_{i+1}} z_{\alpha_1}z_{\alpha_2} \dots z_{\alpha_{i+1}}z_{k_{i+1}} \dots z_{k_{n}} \nonumber \\
			& \ \ \ \ +\sum_{ \substack{\a_j \geq 1;\alpha_1+\alpha_2+ \dots +\alpha_{n+1}\\=k+n+1}} \prod_{j=1}^{n} \binom{\alpha_j-1}{k_j-1}z_{\alpha_1}z_{\alpha_2} \dots z_{\alpha_{n+1}} \nonumber \\
			&=\sum_{\a_1+\a_2=k+2} \binom{\a_1-1}{r+2-\a_2} z_{\a_1}z_{\a_2}z_1^{n-1} \nonumber \\
			& \ \ \ \ +\sum_{i=2}^{n}\sum_{ \substack{\a_j \geq 1;\alpha_1+\alpha_2+ \dots +\alpha_{i+1}\\=k+i+1 }}  \binom{\alpha_1-1}{r+1} \binom{\alpha_i-1}{1-\alpha_{i+1}} z_{\alpha_1}z_{\alpha_2} \dots z_{\alpha_{i+1}}z_1^{n-i} \nonumber \\
			& \ \ \ \ +\sum_{ \substack{\a_j \geq 1;\alpha_1+\alpha_2+ \dots +\alpha_{n+1}\\=k+n+1}}  \binom{\alpha_1-1}{r+1}z_{\alpha_1}z_{\alpha_2} \dots z_{\alpha_{n+1}}\nonumber \\
			&=\sum_{\a_1+\a_2=k+2} \binom{\a_1-1}{r+2-\a_2} z_{\a_1}z_{\a_2}z_1^{n-1} \nonumber \\
			& \ \ \ \ +\sum_{i=2}^{n}\sum_{ \substack{\a_j \geq 1;\alpha_1+\alpha_2+ \dots +\alpha_{i}\\=k+i}}  \binom{\alpha_1-1}{r+1} z_{\alpha_1}z_{\alpha_2} \dots z_{\alpha_{i}}z_1^{n-i+1} \nonumber \\
			& \ \ \ \ +\sum_{ \substack{\a_j \geq 1;\alpha_1+\alpha_2+ \dots +\alpha_{n+1}\\=k+n+1}}  \binom{\alpha_1-1}{r+1}z_{\alpha_1}z_{\alpha_2} \dots z_{\alpha_{n+1}} \nonumber \\
			&=\sum_{\a_1+\a_2=k+2} \binom{\a_1-1}{r+2-\a_2} z_{\a_1}z_{\a_2}z_1^{n-1} \nonumber\\
			& \ \ \ \ +\sum_{i=2}^{n+1}\sum_{ \substack{\a_j \geq 1;\alpha_1+\alpha_2+ \dots +\alpha_{i}\\=k+i}}  \binom{\alpha_1-1}{r+1} z_{\alpha_1}z_{\alpha_2} \dots z_{\alpha_{i}}z_1^{n-i+1}.
		\end{align}
		
		In the first summation of \eqref{equn1}, the minimum value of $\a_2$ is 1, and the maximum value of $\a_2$ is $r+2$, so the minimum value of $\a_1$ is $k-r$, and the maximum value of $\a_1$ is $k+1$. In the second summation, the minimum value of $\a_1$ is $r+2$, and the maximum value of $\a_1$ is $k+1$.
		So \eqref{equn1} becomes

		\begin{align}
			\label{equn2}
			z_{k-r} \shuffle z_{r+2}z_1^{n-1}&=\sum_{\a_1=k-r}^{k+1} \binom{\a_1-1}{\a_1-{(k-r)}} z_{\a_1}z_{k+2-\a_1}z_1^{n-1} \nonumber \\
			& \ \ \ \ +\sum_{i=2}^{n+1} \sum_{\a_1=r+2}^{k+1} \binom{\a_1-1}{r+1}\sum_{ \substack{\a_j \geq 1;\alpha_2+\alpha_3+ \dots +\alpha_{i}\\=k+i-\a_1}} z_{\alpha_1}z_{\alpha_2} \dots z_{\alpha_{i}}z_1^{n-i+1} \nonumber \\
			&=\sum_{\a_1=k-r}^{k+1} \binom{\a_1-1}{k-r-1} z_{\a_1}z_{k+2-\a_1}z_1^{n-1}+\sum_{\a_1=r+2}^{k+1} \binom{\a_1-1}{r+1} z_{\a_1}z_{k+2-\a_1}z_1^{n-1} \nonumber \\
			& \ \ \ \ +\sum_{i=3}^{n+1} \sum_{\a_1=r+2}^{k+1} \binom{\a_1-1}{r+1}\sum_{ \substack{\a_j \geq 1;\alpha_2+\alpha_3+ \dots +\alpha_{i}\\=k+i-\a_1}} z_{\alpha_1}z_{\alpha_2} \dots z_{\alpha_{i}}z_1^{n-i+1} \nonumber\\
			&=A+B+C .
		\end{align}
		where A, B, and C are the first, second, and third sum, respectively, of \eqref{equn2}.
		
		We have
		\begin{align}
			\label{equn3}
			C&=\sum_{i=3}^{n+1} \sum_{\a_1=r+2}^{k+1} \binom{\a_1-1}{r+1}\sum_{ \substack{\a_j \geq 1;\alpha_2+\alpha_3+ \dots +\alpha_{i}\\=k+i-\a_1}} z_{\alpha_1}z_{\alpha_2} \dots z_{\alpha_{i}}z_1^{n-i+1} \nonumber \\
			&=\sum_{\a_1=r+2}^{k+1} \binom{\a_1-1}{r+1} z_{\alpha_1}\sum_{i=3}^{n+1}\sum_{ \substack{\a_j \geq 1;\alpha_2+\alpha_3+ \dots +\alpha_{i}\\=k+i-\a_1}} z_{\alpha_2}z_{\alpha_3} \dots z_{\alpha_{i}}z_1^{n-i+1}.
		\end{align}

		Now

		\begin{align}
			\label{equn4}
			&\sum_{i=3}^{n+1}\sum_{ \substack{\a_j \geq 1;\a_2+\a_3+ \dots +\a_{i}\\=k+i-\a_1}} z_{\a_2}z_{\a_3} \dots z_{\a_{i}}z_1^{n-i+1} \nonumber \\&=\sum_{ \substack{\a_j \geq 1;\a_2+\a_3=k+3-\a_1}}z_{\a_2}z_{\a_{3}}z_1^{n-2}+\sum_{ \substack{\a_j \geq 1;\a_2+\a_3+\a_4=k+4-\a_1}}z_{\a_2}z_{\a_{3}}z_{\a_4}z_1^{n-3} \nonumber \\
			& \ \ \ \ + \dots + \sum_{ \substack{\a_j \geq 1;\a_2+\a_3+ \dots +\a_{n}\\=k+n-\a_1}} z_{\a_2} \dots z_{\a_{n}}z_1 \nonumber \\
			& \ \ \ \ +\sum_{ \substack{\a_j \geq 1;\a_2+\a_3+ \dots +\a_{n+1}\\=k+n+1-\a_1}} z_{\a_2} \dots z_{\a_{n+1}}.
		\end{align}

		Separating from each sum on R.H.S of \eqref{equn4}, the term where each of $\a_4, \a_5, \dots , \a_{n+1}$ is 1 , RHS of \eqref{equn4} becomes

		\begin{align}
			\label{equn5}
			&(n-1)\sum_{ \substack{\a_j \geq 1;\a_2+\a_3=k+3-\a_1}}z_{\a_2}z_{\a_{3}}z_1^{n-2}+ \sum_{ \substack{\a_2+\a_3+\a_4=k+4-\a_1\\\a_j \geq 1;\a_4 \geq 2}}z_{\a_2}z_{\a_{3}}z_{\a_4}z_1^{n-3}+\nonumber\\
&\sum_{ \substack{\a_2+\a_3+\a_4+\a_5=k+5-\a_1\\\a_j \geq 1;\a_2+\a_{3} \neq k+3-\a_1 }}z_{\a_2}z_{\a_{3}}z_{\a_4}z_{\a_5}z_1^{n-4} + \dots +\sum_{ \substack{\a_j \geq 1;\a_2+\a_3+ \dots +\a_{n}\\=k+n-\a_1;\a_2+\a_{3} \neq k+3-\a_1}} z_{\a_2} \dots z_{\a_{n}}z_1+\nonumber\\
&\sum_{ \substack{\a_j \geq 1;\a_2+\a_3+ \dots +\a_{n+1}\\=k+n+1-\a_1;\a_2+\a_{3} \neq k+3-\a_1}} z_{\a_2} \dots z_{\a_{n+1}}.
		\end{align}

		In a similar manner, we successively separate the terms with each of $\a_j,\a_{j+1}, \dots, \a_{n+1}=1$ for $j=5,6,7, \dots, n+1$ from all the sums of \eqref{equn5}, as a result of which we get

		\begin{align}
			\label{equn7}
			R.H.S\text{ of } \eqref{equn4} &=(n-1)\sum_{ \substack{\a_j \geq 1;\a_2+\a_3=k+3-\a_1}}z_{\a_2}z_{\a_{3}}z_1^{n-2} +(n-2)\sum_{ \substack{\a_2+\a_3+\a_4=k+4-\a_1\\ \a_j \geq 1;\a_4 \geq 2}} z_{\a_2}z_{\a_{3}}z_{\a_4}z_1^{n-3} \nonumber \\
			& \ \ \ \ +(n-3)\sum_{ \substack{\a_2+\a_3+\a_4+\a_5=k+5-\a_1\\ \a_j \geq 1;\a_5 \geq 2}}z_{\a_2}z_{\a_{3}}z_{\a_4}z_{\a_5}z_1^{n-4} + \dots + \nonumber \\
			&  \ \ \ \ + 2\sum_{ \substack{\a_j \geq 1;\a_2+\a_3+ \dots +\a_{n}\\=k+n-\a_1;\a_n \geq 2}} z_{\a_2} \dots z_{\a_{n}}z_1+\sum_{ \substack{\a_j \geq 1;\a_2+\a_3+ \dots +\a_{n+1}\\=k+n+1-\a_1;\a_{n+1} \geq 2}} z_{\a_2} \dots z_{\a_{n+1}} \nonumber \\
			&=(n-1)\sum_{ \substack{\a_j \geq 1;\a_2=k+2-\a_1}}z_{\a_2}z_1^{n-1} + (n-1)\sum_{ \substack{\a_2+\a_3=k+3-\a_1\\ \a_j \geq 1; \a_3 \geq 2}}z_{\a_2}z_{\a_{3}}z_1^{n-2} \nonumber \\
			& \ \ \ \ +(n-2)\sum_{ \substack{\a_2+\a_3+\a_4=k+4-\a_1\\ \a_j \geq 1;\a_4 \geq 2}} z_{\a_2}z_{\a_{3}}z_{\a_4}z_1^{n-3} \nonumber \\
			& \ \ \ \ +(n-3)\sum_{ \substack{\a_2+\a_3+\a_4+\a_5=k+5-\a_1\\ \a_j \geq 1;\a_5 \geq 2}}z_{\a_2}z_{\a_{3}}z_{\a_4}z_{\a_5}z_1^{n-4} + \dots +  \nonumber \\
			&  \ \ \ \ + 2\sum_{ \substack{\a_j \geq 1;\a_2+\a_3+ \dots +\a_{n}\\=k+n-\a_1;\a_n \geq 2}} z_{\a_2} \dots z_{\a_{n}}z_1+\sum_{ \substack{\a_j \geq 1;\a_2+\a_3+ \dots +\a_{n+1}\\=k+n+1-\a_1;\a_{n+1} \geq 2}} z_{\a_2} \dots z_{\a_{n+1}} \nonumber \\
			&=(n-1)z_{k+2-\a_1}z_1^{n-1} + \sum_{i=3}^{n+1} (n+2-i)\sum_{ \substack{\a_j \geq 1;\a_2+\a_3+ \dots +\a_{i}\\=k+i-\a_1;\a_{i} \geq 2}} z_{\a_2} \dots z_{\a_{i}}z_1^{n+1-i}.
		\end{align}

		Therefore, from \eqref{equn4} and \eqref{equn7},
		
		\begin{align}
			\label{equn8}
			&\sum_{i=3}^{n+1}\sum_{ \substack{\a_j \geq 1;\a_2+\a_3+ \dots +\a_{i}\\=k+i-\a_1}} z_{\a_2}z_{\a_3} \dots z_{\a_{i}}z_1^{n-i+1} \nonumber \\
			&=(n-1)z_{k+2-\a_1}z_1^{n-1} + \sum_{i=3}^{n+1} (n+2-i)\sum_{ \substack{\a_j \geq 1;\a_2+\a_3+ \dots +\a_{i}\\=k+i-\a_1;\a_{i} \geq 2}} z_{\a_2} \dots z_{\a_{i}}z_1^{n+1-i}.
		\end{align}
		
		Hence from \eqref{equn3} and \eqref{equn8},

		\begin{align}
			\label{equn9}
			C&=\sum_{\a_1=r+2}^{k+1} \binom{\a_1-1}{r+1} z_{\a_1}\bigg[(n-1)z_{k+2-\a_1}z_1^{n-1} + \sum_{i=3}^{n+1} (n+2-i) \nonumber \\
			& \ \ \ \hspace*{5cm} \sum_{ \substack{\a_j \geq 1;\a_2+\a_3+ \dots +\a_{i}\\=k+i-\a_1;\a_{i} \geq 2}} z_{\a_2} \dots z_{\a_{i}}z_1^{n+1-i}\bigg] \nonumber \\
			&=(n-1)\sum_{\a_1=r+2}^{k+1}\binom{\a_1-1}{r+1}z_{\a_1}z_{k+2-\a_1}z_1^{n-1}\nonumber \\
			& \ \ \ \  + \sum_{\a_1=r+2}^{k}\binom{\a_1-1}{r+1}z_{\a_1}\sum_{i=3}^{n+1} (n+2-i)\sum_{ \substack{\a_j \geq 1;\a_2+\a_3+ \dots +\a_{i}\\=k+i-\a_1;\a_{i} \geq 2}} z_{\a_2} \dots z_{\a_{i}}z_1^{n+1-i}.
		\end{align}

		From \eqref{equn2},

		\begin{align}
			\label{equn10}
			&z_{k-r} \shuffle z_{r+2}z_1^{n-1} \nonumber \\
			&=\sum_{\a_1=k-r}^{k+1} \binom{\a_1-1}{k-r-1} z_{\a_1}z_{k+2-\a_1}z_1^{n-1}+\sum_{\a_1=r+2}^{k+1} \binom{\a_1-1}{r+1} z_{\a_1}z_{k+2-\a_1}z_1^{n-1} \nonumber \\
			 & \ \ \ \  +(n-1)\sum_{\a_1=r+2}^{k+1}\binom{\a_1-1}{r+1}z_{\a_1}z_{k+2-\a_1}z_1^{n-1}\nonumber \\
			 & \ \ \ \  + \sum_{\a_1=r+2}^{k}\binom{\a_1-1}{r+1}z_{\a_1}\sum_{i=3}^{n+1} (n+2-i)\sum_{ \substack{\a_j \geq 1;\a_2+\a_3+ \dots +\a_{i}\\=k+i-\a_1;\a_{i} \geq 2}} z_{\a_2} \dots z_{\a_{i}}z_1^{n+1-i}\nonumber \\
			&=\sum_{\a_1=k-r}^{k+1} \binom{\a_1-1}{k-r-1} z_{\a_1}z_{k+2-\a_1}z_1^{n-1}+ n\sum_{\a_1=r+2}^{k+1} \binom{\a_1-1}{r+1} z_{\a_1}z_{k+2-\a_1}z_1^{n-1} \nonumber \\
			& \ \ \ \ + \sum_{\a_1=r+2}^{k} \binom{\a_1-1}{r+1}z_{\a_1}\sum_{i=3}^{n+1} (n+2-i)\sum_{ \substack{\a_j \geq 1;\a_2+\a_3+ \dots +\a_{i}\\=k+i-\a_1;\a_{i} \geq 2}} z_{\a_2} \dots z_{\a_{i}}z_1^{n+1-i}.
		\end{align}

		Therefore,
		\begin{align}
			\label{equn11}   	
			&\sum_{r=0}^{k-2}{(-1)}^r z_{k-r} \shuffle z_{r+2}z_1^{n-1}\nonumber\\
			&=\sum_{r=0}^{k-2}{(-1)}^r\bigg[ \sum_{\a_1=k-r}^{k+1} \binom{\a_1-1}{k-r-1} z_{\a_1}z_{k+2-\a_1}z_1^{n-1} + n\sum_{\a_1=r+2}^{k+1} \binom{\a_1-1}{r+1} z_{\a_1}z_{k+2-\a_1}z_1^{n-1} \nonumber \\
			& \ \ \ \ + \sum_{\a_1=r+2}^{k} \binom{\a_1-1}{r+1}z_{\a_1}\sum_{i=3}^{n+1} (n+2-i)  \sum_{ \substack{\a_j \geq 1;\a_2+\a_3+ \dots +\a_{i}\\=k+i-\a_1;\a_{i} \geq 2}} z_{\a_2} \dots z_{\a_{i}}z_1^{n+1-i}\bigg].
		\end{align}

		First sum on the RHS of \eqref{equn11} is equal to
		\begin{align}
			\label{equn12}
			&\sum_{r=0}^{k-2}{(-1)}^r\sum_{\a_1=k-r}^{k+1} \binom{\a_1-1}{k-r-1} z_{\a_1}z_{k+2-\a_1}z_1^{n-1} \nonumber \\
			&=\sum_{\a_1=k}^{k+1} \binom{\a_1-1}{k-1} z_{\a_1}z_{k+2-\a_1}z_1^{n-1} -\sum_{\a_1=k-1}^{k+1} \binom{\a_1-1}{k-2} z_{\a_1}z_{k+2-\a_1}z_1^{n-1} \nonumber \\
			& \ \ \ \hspace{3em} + \dots +{(-1)}^{k-3}\sum_{\a_1=3}^{k+1} \binom{\a_1-1}{2} z_{\a_1}z_{k+2-\a_1}z_1^{n-1} +{(-1)}^{k-2}\sum_{\a_1=2}^{k+1} \binom{\a_1-1}{1} z_{\a_1}z_{k+2-\a_1}z_1^{n-1} \nonumber \\
			&={(-1)}^{k-2}\bigg[\sum_{\a_1=2}^{k+1} \binom{\a_1-1}{1} z_{\a_1}z_{k+2-\a_1}z_1^{n-1} -\sum_{\a_1=3}^{k+1} \binom{\a_1-1}{2} z_{\a_1}z_{k+2-\a_1}z_1^{n-1}+ \dots  \nonumber \\
			& \ \ \ \hspace{3em} +{(-1)}^{k-3}\sum_{\a_1=k-1}^{k+1} \binom{\a_1-1}{k-2} z_{\a_1}z_{k+2-\a_1}z_1^{n-1} + {(-1)}^{k-2}\sum_{\a_1=k}^{k+1} \binom{\a_1-1}{k-1} z_{\a_1}z_{k+2-\a_1}z_1^{n-1}\bigg] \nonumber \\
			&={(-1)}^{k}\sum_{r=0}^{k-2}{(-1)}^r\sum_{\a_1=r+2}^{k+1} \binom{\a_1-1}{r+1} z_{\a_1}z_{k+2-\a_1}z_1^{n-1}.
		\end{align}

		Hence from \eqref{equn11} and \eqref{equn12}, we have

		\begin{align*}
			&\sum_{r=0}^{k-2}{(-1)}^r z_{k-r}\shuffle z_{r+2}z_1^{n-1}\\
			&=\sum_{r=0}^{k-2}{(-1)}^r\bigg[ \{n+{(-1)}^k\}\sum_{\a_1=r+2}^{k+1} \binom{\a_1-1}{r+1} z_{\a_1}z_{k+2-\a_1}z_1^{n-1}\\
			& \ \ \ \ + \sum_{\a_1=r+2}^{k} \binom{\a_1-1}{r+1}z_{\a_1}\sum_{i=3}^{n+1} (n+2-i)\sum_{ \substack{\a_j \geq 1;\a_2+\a_3+ \dots +\a_{i}\\=k+i-\a_1;\a_{i} \geq 2}} z_{\a_2} \dots z_{\a_{i}}z_1^{n+1-i}\bigg]\\
			&=\sum_{r=0}^{k-2}{(-1)}^r\bigg[\sum_{\a_1=r+2}^{k} \binom{\a_1-1}{r+1} z_{\a_1}\bigg\{\sum_{i=3}^{n+1} (n+2-i)\sum_{ \substack{\a_j \geq 1;\a_2+\a_3+ \dots +\a_{i}\\=k+i-\a_1;\a_{i} \geq 2}} z_{\a_2} \dots z_{\a_{i}}z_1^{n+1-i} \\
			& \ \ \ \ \hspace{10em}+ \{n+{(-1)}^k\}z_{k+2-\a_1}z_1^{n-1}\bigg\} +\{n+{(-1)}^k\}\binom{k}{r+1}z_{k+1}z_1^{n}\bigg]\\
			&=\sum_{\a_1=2}^{k} \binom{\a_1-1}{1} z_{\a_1}\bigg\{\sum_{i=3}^{n+1} (n+2-i)\sum_{ \substack{\a_j \geq 1;\a_2+\a_3+ \dots +\a_{i}\\=k+i-\a_1;\a_{i} \geq 2}} z_{\a_2} \dots z_{\a_{i}}z_1^{n+1-i} + \{n+{(-1)}^k\}z_{k+2-\a_1}z_1^{n-1}\bigg\} \nonumber \\
			& \ \ \ \ - \sum_{\a_1=3}^{k} \binom{\a_1-1}{2} z_{\a_1}\bigg\{\sum_{i=3}^{n+1} (n+2-i)\sum_{ \substack{\a_j \geq 1;\a_2+\a_3+ \dots +\a_{i}\\=k+i-\a_1;\a_{i} \geq 2}} z_{\a_2} \dots z_{\a_{i}}z_1^{n+1-i} + \{n+{(-1)}^k\}z_{k+2-\a_1}z_1^{n-1}\bigg\} \nonumber \\
				\end{align*}
		\begin{align}
			\label{equn14}
			& \ \ \ \ + \dots  +{(-1)}^{k-3}\sum_{\a_1=k-1}^{k} \binom{\a_1-1}{k-2} z_{\a_1}\bigg\{\sum_{i=3}^{n+1} (n+2-i)\sum_{ \substack{\a_j \geq 1;\a_2+\a_3+ \dots +\a_{i}\\=k+i-\a_1;\a_{i} \geq 2}} z_{\a_2} \dots z_{\a_{i}}z_1^{n+1-i} \nonumber \\
			& \ \ \ \ \hspace{20em} + \{n+{(-1)}^k\}z_{k+2-\a_1}z_1^{n-1}\bigg\} \nonumber \\
			& \ \ \ \ + {(-1)}^{k-2}\sum_{\a_1=k}^{k} \binom{\a_1-1}{k-1} z_{\a_1}\bigg\{\sum_{i=3}^{n+1} (n+2-i)\sum_{ \substack{\a_j \geq 1;\a_2+\a_3+ \dots +\a_{i}\\=k+i-\a_1;\a_{i} \geq 2}} z_{\a_2} \dots z_{\a_{i}}z_1^{n+1-i} \nonumber \\
			& \ \ \ \ \hspace{9em} +\{n+{(-1)}^k\}z_{k+2-\a_1}z_1^{n-1}\bigg\}+\{n+{(-1)}^k\}\sum_{r=0}^{k-2}{(-1)}^r\binom{k}{r+1}z_{k+1}z_1^{n} \nonumber \\
			&=\binom{1}{1}z_2\bigg\{\sum_{i=3}^{n+1} (n+2-i)\sum_{ \substack{\a_j \geq 1;\a_2+\a_3+ \dots +\a_{i}\\=k+i-2;\a_{i} \geq 2}} z_{\a_2} \dots z_{\a_{i}}z_1^{n+1-i}+\{n+{(-1)}^k\}z_{k+2-2}z_1^{n-1}\bigg\} \nonumber \\
			& \ \ \ \ + \bigg\{\binom{2}{1}-\binom{2}{2}\bigg\}z_3\bigg\{\sum_{i=3}^{n+1} (n+2-i)\sum_{ \substack{\a_j \geq 1;\a_2+\a_3+ \dots +\a_{i}\\=k+i-3;\a_{i} \geq 2}} z_{\a_2} \dots z_{\a_{i}}z_1^{n+1-i}+\{n+{(-1)}^k\}z_{k+2-3}z_1^{n-1}\bigg\} \nonumber \\
			& \ \ \ \ \hspace{20em}+\{n+{(-1)}^k\}z_{k+2-4}z_1^{n-1}\bigg\} \nonumber \\
			& \ \ \ \ + \dots + \bigg\{\binom{k-1}{1}-\binom{k-1}{2}+\binom{k-1}{3}- \dots + {(-1)}^{k-3}\binom{k-1}{k-2} +{(-1)}^{k-2}\binom{k-1}{k-1}\bigg\} \nonumber \\
			& \ \ \ \ \hspace{8em}z_k\bigg\{\sum_{i=3}^{n+1} (n+2-i)\sum_{ \substack{\a_j \geq 1;\a_2+\a_3+ \dots +\a_{i}\\=i;\a_{i} \geq 2}} z_{\a_2} \dots z_{\a_{i}}z_1^{n+1-i}+\{n+{(-1)}^k\}z_{2}z_1^{n-1}\bigg\} \nonumber \\
			& \ \ \ \ + \{n+{(-1)}^k\}\sum_{r=0}^{k-2}{(-1)}^r\binom{k}{r+1} z_{k+1}z_1^{n}\nonumber \\
			&=\sum_{l=2}^{k}\bigg\{\binom{l-1}{1}-\binom{l-1}{2}+\binom{l-1}{3}- \dots + {(-1)}^{l-3}\binom{l-1}{l-2} +{(-1)}^{l-2}\binom{l-1}{l-1}\bigg\} \nonumber \\
			& \ \ \ \ z_l\bigg\{\sum_{i=3}^{n+1} (n+2-i)\sum_{ \substack{\a_j \geq 1;\a_2+\a_3+ \dots +\a_{i}\\=k+i-l;\a_{i} \geq 2}} z_{\a_2} \dots z_{\a_{i}}z_1^{n+1-i}+\{n+{(-1)}^k\}z_{k+2-l}z_1^{n-1}\bigg\}\nonumber \\
			& \ \ \ \ + \{n+{(-1)}^k\}\sum_{r=0}^{k-2}{(-1)}^r\binom{k}{r+1} z_{k+1}z_1^{n}\nonumber \\
			&=\sum_{l=2}^{k}\bigg\{ \sum_{v=1}^{l-1}{(-1)}^{v-1}\binom{l-1}{v}\bigg\}z_l\bigg\{\sum_{i=3}^{n+1} (n+2-i)\sum_{ \substack{\a_j \geq 1;\a_2+\a_3+ \dots +\a_{i}\\=k+i-l;\a_{i} \geq 2}} z_{\a_2} \dots z_{\a_{i}}z_1^{n+1-i} \nonumber\\
			& \ \ \ \ +\{n+{(-1)}^k\}z_{k+2-l}z_1^{n-1}\bigg\}  + \bigg\{ \sum_{r=0}^{k-2}{(-1)}^{r}\binom{k}{r+1}\bigg\}\{n+{(-1)}^k\}z_{k+1} z_1^{n}.
		\end{align}

		Since for any $k \in \mathbb{N}$,  $$\sum_{r=0}^{k}{(-1)}^{r}\binom{k}{r}=0,$$ 

we have
		\begin{align}
			\label{equn15}
			& \ \ \hspace*{2em}\sum_{r=0}^{k-2}{(-1)}^{r+1}\binom{k}{r+1}+1+{(-1)}^k=0 \nonumber \\
			&\implies  \sum_{r=0}^{k-2}{(-1)}^{r}\binom{k}{r+1}=\{1+{(-1)}^k\}.
		\end{align}
		
		Similarly,
		\begin{align}
			\label{equn16}
			& \ \ \hspace*{2em} \sum_{v=0}^{l-1}{(-1)}^{v}\binom{l-1}{v}=0 \nonumber \\
			&\implies \sum_{v=1}^{l-1}{(-1)}^{v}\binom{l-1}{v}+1=0 \nonumber \\
			&\implies \sum_{v=1}^{l-1}{(-1)}^{v-1}\binom{l-1}{v}-1=0 \nonumber \\
			&\implies \sum_{v=1}^{l-1}{(-1)}^{v-1}\binom{l-1}{v}=1.
		\end{align}

		Now using \eqref{equn15} and \eqref{equn16} in \eqref{equn14}, we get
		
		\begin{align}
			\label{equn19}
			&\sum_{r=0}^{k-2}{(-1)}^r z_{k-r}\shuffle z_{r+2}z_1^{n-1} \nonumber \\
			&=\sum_{\a_1=2}^{k}z_{\a_1}\bigg\{\sum_{i=3}^{n+1} (n+2-i)\sum_{ \substack{\a_j \geq 1;\a_2+\a_3+ \dots +\a_{i}\\=k+i-\a_1;\a_{i} \geq 2}} z_{\a_2} \dots z_{\a_{i}}z_1^{n+1-i}+\{n+{(-1)}^k\}z_{k+2-\a_1}z_1^{n-1}\bigg\} \nonumber \\
			& \ \ \ \ \hspace{8em} + \{1+{(-1)}^k\}\{n+{(-1)}^k\}z_{k+1} z_1^{n} \nonumber \\
			&=\sum_{\a_1=2}^{k}z_{\a_1}\bigg\{\sum_{i=3}^{n+1} (n+2-i)\sum_{ \substack{\a_j \geq 1;\a_2+\a_3+ \dots +\a_{i}\\=k+i-\a_1;\a_{i} \geq 2}} z_{\a_2} \dots z_{\a_{i}}z_1^{n+1-i}\bigg\} \nonumber \\
			& \ \ \ \ +\{n+{(-1)}^k\}\sum_{\a_1=2}^{k}z_{\a_1}z_{k+2-\a_1}z_1^{n-1} +\{1+{(-1)}^k\}(n+1)z_{k+1} z_1^{n}.\nonumber\\
			&=\sum_{i=3}^{n+1} (n+2-i)\sum_{\a_1=2}^{k}z_{\a_1}\sum_{ \substack{\a_j \geq 1;\a_2+\a_3+ \dots +\a_{i}\\=k+i-\a_1;\a_{i} \geq 2}} z_{\a_2} \dots z_{\a_{i}}z_1^{n+1-i} \nonumber \\
			& \ \ \ \ +\{n+{(-1)}^k\}\sum_{\a_1=2}^{k}z_{\a_1}z_{k+2-\a_1}z_1^{n-1}
			+\{1+{(-1)}^k\}(n+1)z_{k+1} z_1^{n}.
		\end{align}
		
		Let $$P=\sum_{\a_1=2}^{k}z_{\a_1}\sum_{ \substack{\a_j \geq 1;\a_2+\a_3+ \dots +\a_{i}\\=k+i-\a_1;\a_{i} \geq 2}} z_{\a_2} \dots z_{\a_{i}}z_1^{n+1-i}.$$
		Then
		\begin{align}
			\label{equn20}
			P=P_2+P_3+  \dots + P_{k-1}+P_k ~,
		\end{align}
		where
		$$P_s=z_{s}\sum_{ \substack{\a_j \geq 1;\a_2+\a_3+ \dots +\a_{i}\\=k+i-s;\a_{i} \geq 2}} z_{\a_2} \dots z_{\a_{i}}z_1^{n+1-i}\text{,  }\text{ }2\leq s\leq k.$$
	
		In $P_s$, minimum value of $\a_{i}$ is 2 and the maximum value of $\a_{i}$ is $k-s+2$.\\ 
		Therefore,
		
		\begin{align*}
			P_s&=z_s\bigg[Q_2+Q_3 + \dots +Q_{k-s+2}\bigg]z_1^{n+1-i},
		\end{align*}
			where $$Q_m=\sum_{ \substack{\a_j \geq 1;\a_2+\a_3+ \dots +\a_{i-1}\\=k+i-s-m}} z_{\a_2} \dots z_{\a_{i-1}}z_m,\text{ }\text{ }2\leq m\leq k-s+2.$$
		In $Q_m$, min value of $\a_2$ is 1, and the max value of $\a_2$ is $k-s-m+3$.\\
		Hence 
		\begin{align*}
			Q_m&=\bigg[\sum_{p=2}^{k-s-m+3}\sum_{j=0}^{i-4}z_{k-p-s-m+4}z_1^jz_{p}z_1^{i-4-j}+z_{k-s-m+3}z_1^{i-3}\\
			& \ \ \ \ \ \  +\sum_{p=s+m-1}^{k-1}z_{k-p}\sum_{ \substack{\a_j \geq 1;\a_j \neq p-m-s+4; \\ \a_3+ \dots +\a_{i-1}=p+i-m-s}} z_{\a_3} \dots z_{\a_{i-1}}\bigg]z_m.
		\end{align*}
		
	Therefore,
	\begin{align*}
		P_s&=z_s \bigg[\sum_{m=2}^{k-s+2}\bigg\{\sum_{p=2}^{k-s-m+3}\sum_{j=0}^{i-4}z_{k-p-s-m+4}z_1^jz_{p}z_1^{i-4-j}+z_{k-s-m+3}z_1^{i-3}\\
		& \ \ \ \ \ \  +\sum_{p=s+m-1}^{k-1}z_{k-p}\sum_{ \substack{\a_j \geq 1;\a_j \neq p-m-s+4; \\ \a_3+ \dots +\a_{i-1}=p+i-m-s}} z_{\a_3} \dots z_{\a_{i-1}}\bigg\}z_m\bigg]z_1^{n+1-i}.
	\end{align*}

Thus 
\begin{align*}
	P&=\sum_{s=2}^{k}z_s \bigg[\sum_{m=2}^{k-s+2}\bigg\{\sum_{p=2}^{k-s-m+3}\sum_{j=0}^{i-4}z_{k-p-s-m+4}z_1^jz_{p}z_1^{i-4-j}+z_{k-s-m+3}z_1^{i-3}\\
	& \ \ \ \ \ \  +\sum_{p=s+m-1}^{k-1}z_{k-p}\sum_{ \substack{\a_j \geq 1;\a_j \neq p-m-s+4; \\ \a_3+\dots +\a_{i-1}=p+i-m-s}} z_{\a_3} \dots z_{\a_{i-1}}\bigg\}z_m\bigg]z_1^{n+1-i}. 
\end{align*}

		
		Putting the expression of P in \eqref{equn19}, we get the lemma.


	\end{proof}

	\begin{theorem}
		For any integers $k, n \geq 1$,
		
		\begin{align}
			\label{equn46}
			\sum_{r=0}^{k-2} {(-1)}^r z_2z_1^{k-r-2} \shuffle z_{n+1}z_1^{r}&= \sum_{ \substack{a_1+a_2+ \dots +a_k=n \\a_i \geq 0}} (a_k+1) z_{a_k+2}z_{a_1+1}z_{a_2+1} \dots z_{a_{k-1}+1} \nonumber \\
			& \ \ \ \  +{(-1)}^k\bigg[z_{n+1}z_2z_1^{k-2}+z_{n+1}z_1z_2z_1^{k-3}\nonumber \\ & \ \ \ \ + \dots + z_{n+1}z_1^{k-2}z_2 + {(n+1)}z_{n+2}z_1^{k-1}\bigg].	
		\end{align}	
	\end{theorem}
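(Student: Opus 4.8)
The plan is to deduce this identity from Lemma \ref{lemma4} by applying the antiautomorphism $\tau$, which (as recorded in the preliminaries) is an automorphism of the shuffle algebra $\hh_{\SH}$ and satisfies $\tau^2=\mathrm{id}$. The guiding observation is that the left-hand side is precisely the $\tau$-image of the left-hand side of Lemma \ref{lemma4}. Writing $z_{k-r}=x^{k-r-1}y$ and $z_{r+2}z_1^{n-1}=x^{r+1}y^{n}$ and using that $\tau$ reverses a word and interchanges $x$ and $y$, one checks directly that
$$\tau(z_{k-r})=z_2 z_1^{k-r-2}, \qquad \tau(z_{r+2}z_1^{n-1})=z_{n+1}z_1^{r}.$$
Since $\tau$ is multiplicative for $\SH$, applying it term by term to the left-hand side of Lemma \ref{lemma4} yields exactly $\sum_{r=0}^{k-2}(-1)^r z_2 z_1^{k-r-2}\SH z_{n+1}z_1^{r}$. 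Hence, by Lemma \ref{lemma4} together with $\tau^2=\mathrm{id}$, the left-hand side of the theorem equals $\tau$ applied to the right-hand side of Lemma \ref{lemma4}, and it remains to push $\tau$ through that expression and simplify. (For $k=1$ both sides are empty, hence zero, so we may assume $k\geq 2$.)

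Next I would dualise the three groups of terms of the right-hand side of Lemma \ref{lemma4} separately. The two lower-order groups are immediate letter computations:
$$\tau(z_{k+1}z_1^{n})=z_{n+2}z_1^{k-1}, \qquad \tau(z_{\alpha_1}z_{k+2-\alpha_1}z_1^{n-1})=z_{n+1}z_1^{\,k-\alpha_1}z_2 z_1^{\,\alpha_1-2}.$$
Reindexing by $j=k-\alpha_1$, the second group becomes $\{n+(-1)^k\}\sum_{j=0}^{k-2}z_{n+1}z_1^{j}z_2 z_1^{k-2-j}$ and the third becomes $\{1+(-1)^k\}(n+1)z_{n+2}z_1^{k-1}$. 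These already carry the shapes of the correction terms in the theorem; the only defect is that their coefficients $\{n+(-1)^k\}$ and $\{1+(-1)^k\}$ are not yet the clean coefficient $(-1)^k$ of the statement.

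The crux is therefore to evaluate $\tau$ on the principal group
$$\Pi:=\sum_{i=3}^{n+1}(n+2-i)\sum_{\alpha_1=2}^{k}z_{\alpha_1}\sum_{\substack{\alpha_2+\cdots+\alpha_i=k+i-\alpha_1\\ \alpha_j\geq 1,\ \alpha_i\geq 2}}z_{\alpha_2}\cdots z_{\alpha_i}z_1^{n+1-i}$$
and to show that it supplies exactly the missing pieces, namely
$$\tau(\Pi)=\sum_{\substack{a_1+\cdots+a_k=n\\ a_i\geq 0}}(a_k+1)z_{a_k+2}z_{a_1+1}\cdots z_{a_{k-1}+1}\;-\;n\sum_{j=0}^{k-2}z_{n+1}z_1^{j}z_2 z_1^{k-2-j}\;-\;(n+1)z_{n+2}z_1^{k-1}.$$
Granting this, the three dualised groups recombine: the coefficient of $\sum_{j}z_{n+1}z_1^{j}z_2 z_1^{k-2-j}$ becomes $-n+\{n+(-1)^k\}=(-1)^k$, and the coefficient of $(n+1)z_{n+2}z_1^{k-1}$ becomes $-1+\{1+(-1)^k\}=(-1)^k$, leaving precisely the right-hand side of the theorem (whose first summand is the Proposition \ref{p} expression for $\zeta^{*}$).

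I expect the displayed identity for $\tau(\Pi)$ to be the main obstacle. My approach is to dualise each basis word $z_{\alpha_1}\cdots z_{\alpha_i}z_1^{n+1-i}$ of $\Pi$ by hand (reverse and swap $x\leftrightarrow y$), obtaining a depth-$k$ word, and then to reindex the composition variables so that the weight $(n+2-i)$ is converted into the factor $(a_k+1)$; a convenient target form is supplied by Lemma \ref{lem2}, which already rewrites $\sum(a_k+1)z_{a_k+2}z_{a_1+1}\cdots z_{a_{k-1}+1}$ as $\sum_{t=1}^{n+1}\sum_{a_1+\cdots+a_k=n+1-t}z_{a_k+t+1}z_{a_1+1}\cdots z_{a_{k-1}+1}$, so that the index $t$ there corresponds, after dualisation, to the number $n+2-i$ of trailing $z_1$'s. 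The side condition $\alpha_i\geq 2$ is exactly what carves off the two subtracted correction terms, thereby accounting for the gap between $\{n+(-1)^k\},\{1+(-1)^k\}$ and $(-1)^k$. A useful guard rail here is that $\tau$ sends a weight-$w$, depth-$d$ word to a weight-$w$, depth-$(w-d)$ word, so every depth-$(n+1)$, weight-$(k+n+1)$ word of $\Pi$ lands in depth $k$, matching the target. Finally, applying $Z$ to the resulting identity, together with the height-one duality $\zeta(2,1^{k-r-2})=\zeta(k-r)$, feeds directly into Theorem \ref{T105}.
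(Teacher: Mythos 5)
Your reduction is the same one the paper uses: apply the antiautomorphism $\tau$ to both sides of Lemma \ref{lemma4}, use that $\tau$ is an automorphism of $\hh_{\SH}$ with $\tau(z_{k-r})=z_2z_1^{k-r-2}$ and $\tau(z_{r+2}z_1^{n-1})=z_{n+1}z_1^{r}$, dualize the three groups of the right-hand side separately, and recombine the coefficients $\{n+(-1)^k\}$ and $\{1+(-1)^k\}$ with the boundary terms of the composition sum to produce the uniform factor $(-1)^k$. Your bookkeeping of that recombination is correct: the $a_k=n-1$ terms of the full sum contribute $n\sum_{j=0}^{k-2}z_{n+1}z_1^{j}z_2z_1^{k-2-j}$ and the $a_k=n$ term contributes $(n+1)z_{n+2}z_1^{k-1}$, so $-n+\{n+(-1)^k\}=(-1)^k$ and $-1+\{1+(-1)^k\}=(-1)^k$, exactly as in \eqref{equn55}.

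The genuine gap is the identity you yourself flag as the crux, the evaluation of $\tau(\Pi)$: you state it but only sketch a word-by-word dualization and reindexing via Lemma \ref{lem2}. The claim is true --- it is precisely the paper's computation of $\tau[U]$ in \eqref{equn52}, phrased as (full sum) minus (the $a_k=n-1$ and $a_k=n$ boundary terms) instead of as the sum restricted to $0\leq a_k\leq n-2$ --- but the step that actually makes it go through is an observation absent from your sketch. After applying $\tau$, each term acquires the leading factor $z_{n+3-i}$ and a trailing block determined by $s$; pulling these out, the remaining inner sum $X$ over $s,m,p,j$ and the restricted $\alpha$'s is recognized as the sum of \emph{all} words of depth $k-1$ and weight $k+i-2$, that is, $\sum_{a_1+\cdots+a_{k-1}=i-1,\,a_j\geq 0}z_{a_1+1}\cdots z_{a_{k-1}+1}$. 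That single enumeration collapses the entire nested sum at once; without it, the reindexing you propose would have to be carried out case by case over all of $s,m,p,j$, which is where essentially all of the remaining work lies. With that observation supplied, your argument closes exactly as written.
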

	
	\begin{proof}
		For $k=1$ it is obvious. \\
		From lemma \ref{lemma4}, we have
		\begin{align}
			\label{equn47}
			&\sum_{r=0}^{k-2}{(-1)}^r z_{k-r}\shuffle z_{r+2}z_1^{n-1} \nonumber \\
			&=U+V+W,
		\end{align}
		
		where
		\begin{align*}
			&U=\sum_{i=3}^{n+1} (n+2-i)\sum_{s=2}^{k}z_s \bigg[\sum_{m=2}^{k-s+2}\bigg\{\sum_{p=2}^{k-s-m+3}\sum_{j=0}^{i-4}z_{k-p-s-m+4}z_1^jz_{p}z_1^{i-4-j}+z_{k-s-m+3}z_1^{i-3}\\
			& \ \ \ \ \ \  +\sum_{p=s+m-1}^{k-1}z_{k-p}\sum_{ \substack{\a_j \geq 1;\a_j \neq p-m-s+4; \\ \a_3+\dots +\a_{i-1}=p+i-m-s}} z_{\a_3} \dots z_{\a_{i-1}}\bigg\}z_m\bigg]z_1^{n+1-i}, \nonumber \\
			&V=\{n+{(-1)}^k\}\sum_{\a_1=2}^{k}z_{\a_1}z_{k+2-\a_1}z_1^{n-1},\nonumber \\
			&W=\{1+{(-1)}^k\}(n+1)z_{k+1} z_1^{n}.
		\end{align*}	
		Applying the antiautomorphism $\tau$, on both sides of \eqref{equn47}, we get
		
		\begin{align}
			\label{equn48}
			&	\ \ \ \ \ \hspace{1em} \sum_{r=0}^{k-2}{(-1)}^r \tau[z_{k-r}\shuffle z_{r+2}z_1^{n-1}]
			=\tau[U+V+W]=\tau[U]+\tau[V]+\tau[W] \nonumber \\
			&\implies \sum_{r=0}^{k-2}{(-1)}^r [\tau(x^{k-r-1}y) \shuffle \tau(x^{r+1}y^n)]=\tau[U]+\tau[V]+\tau[W] \nonumber \\
			&\implies \sum_{r=0}^{k-2}{(-1)}^r[xy^{k-r-1} \shuffle x^ny^{r+1}]=\tau[U]+\tau[V]+\tau[W] \nonumber \\
			&\implies \sum_{r=0}^{k-2}{(-1)}^r[z_{2}z_1^{k-r-2} \shuffle z_{n+1}z_1^{r}]=\tau[U]+\tau[V]+\tau[W].
		\end{align}
		
		Now,
		\begin{align}
			\label{equn49}
			\tau[U]&=\sum_{i=3}^{n+1} (n+2-i)\sum_{s=2}^{k}\tau \bigg[z_s \bigg[\sum_{m=2}^{k-s+2}\bigg\{\sum_{p=2}^{k-s-m+3}\sum_{j=0}^{i-4}z_{k-p-s-m+4}z_1^jz_{p}z_1^{i-4-j}+z_{k-s-m+3}z_1^{i-3} \nonumber\\
			& \ \ \ \ \ \  +\sum_{p=s+m-1}^{k-1}z_{k-p}\sum_{ \substack{\a_j \geq 1;\a_j \neq p-m-s+4; \\ \a_3+\dots +\a_{i-1}=p+i-m-s}} z_{\a_3} \dots z_{\a_{i-1}}\bigg\}z_m\bigg]z_1^{n+1-i}\bigg] \nonumber \\
			&=\sum_{i=3}^{n+1} (n+2-i)\sum_{s=2}^{k}\bigg[ \sum_{m=2}^{k-s+2}\bigg\{\sum_{p=2}^{k-s-m+3}\sum_{j=0}^{i-4}\tau \bigg(z_sz_{k-p-s-m+4}z_1^jz_{p}z_1^{i-4-j}z_mz_1^{n+1-i} \nonumber\\
			&  \ \ +z_sz_{k-s-m+3}z_1^{i-3}z_mz_1^{n+1-i}\bigg)  +\sum_{p=s+m-1}^{k-1}\sum_{ \substack{\a_j \geq 1;\a_j \neq p-m-s+4; \\ \a_3+\dots +\a_{i-1}=p+i-m-s}}\tau \bigg(z_sz_{k-p} z_{\a_3} \dots z_{\a_{i-1}}z_mz_1^{n+1-i}\bigg)\bigg\}\bigg].
		\end{align}	
		
	Note that,	
		\begin{align}
			\label{equn50}
			& \hspace*{1em} \tau \big(z_sz_{k-p-s-m+4}z_1^jz_{p}z_1^{i-4-j}z_{m}z_1^{n+1-i} + z_sz_{k-s-m+3}z_1^{i-3}z_{m}z_1^{n+1-i}\big) \nonumber \\
			&=\tau\big(x^{s-1}yx^{k-p-s-m+3}y^{j+1}x^{p-1}y^{i-j-3}x^{m-1}y^{n+2-i}+ x^{s-1}yx^{k-s-m+2}y^{i-2}x^{m-1}y^{n+2-i}\big) \nonumber \\
			&=x^{n+2-i}y^{m-1}x^{i-j-3}y^{p-1}x^{j+1}y^{k-p-s-m+3}xy^{s-1} + x^{n+2-i}y^{m-1}x^{i-2}y^{k-s-m+2}xy^{s-1} \nonumber \\
			&=z_{n+3-i}z_1^{m-2}z_{i-2-j}z_1^{p-2}z_{j+2}z_1^{k-p-s-m+2}z_2z_1^{s-2} + z_{n+3-i}z_1^{m-2}z_{i-1}z_1^{k-s-m+1}z_2z_1^{s-2}.
		\end{align}	
	
	and
	     \begin{align}
	     	\label{adi}
	     	&\tau \{z_s z_{k-p}z_{\a_3} \dots z_{\a_{i-1}}z_mz_1^{n+1-i}\} \nonumber \\
	     	& =\tau\{x^{s-1}yx^{k-p-1}yx^{\a_3-1}y \dots x^{\a_{i-1}-1}yx^{m-1}y^{n+2-i}\} \nonumber \\
	     	&  =\tau\{x^{n+2-i}y^{m-1}xy^{\a_{i-1}-1}x \dots xy^{\a_3-1}xy^{k-p-1}xy^{s-1}\}\nonumber \\
	     	& = z_{n+3-i}z_1^{m-2}z_2z_1^{\a_{i-1}-2}z_2 \dots z_2z_1^{\a_3-2}z_2z_1^{k-p-2}z_2z_1^{s-2}.
	     \end{align}
		
		Thus from \eqref{equn49}, \eqref{equn50} and \eqref{adi} , we get
		
		\begin{align}
			\label{equn51}
			\tau[U]&=\sum_{i=3}^{n+1} (n+2-i)z_{n+3-i}\sum_{s=2}^{k} \bigg[\sum_{m=2}^{k-s+2}z_1^{m-2}\bigg\{\sum_{p=2}^{k-s-m+3}\sum_{j=0}^{i-4} z_{i-2-j}z_1^{p-2}z_{j+2}z_1^{k-p-s-m+2} \nonumber\\
			&  \ \ + z_{i-1}z_1^{k-s-m+1}   +\sum_{p=s+m-1}^{k-1}\sum_{ \substack{\a_j \geq 1;\a_j \neq p-m-s+4; \\ \a_3+\dots +\a_{i-1}=p+i-m-s}}z_2z_1^{\a_{i-1}-2}z_2 \dots z_2z_1^{\a_3-2}z_2z_1^{k-p-2} \bigg\}\bigg]z_2z_1^{s-2}\nonumber\\
			&=\sum_{i=3}^{n+1} (n+2-i)z_{n+3-i}X,
		\end{align}
		where
		\begin{align*}
			X&=\sum_{s=2}^{k} \bigg[\sum_{m=2}^{k-s+2}z_1^{m-2}\bigg\{\sum_{p=2}^{k-s-m+3}\sum_{j=0}^{i-4} z_{i-2-j}z_1^{p-2}z_{j+2}z_1^{k-p-s-m+2} \nonumber\\
			&  \ \ + z_{i-1}z_1^{k-s-m+1}   +\sum_{p=s+m-1}^{k-1}\sum_{ \substack{\a_j \geq 1;\a_j \neq p-m-s+4; \\ \a_3+\dots +\a_{i-1}=p+i-m-s}}z_2z_1^{\a_{i-1}-2}z_2 \dots z_2z_1^{\a_3-2}z_2z_1^{k-p-2} \bigg\}\bigg]z_2z_1^{s-2}.
		\end{align*}
		
		Observe that $X$ is the sum of all words with depth $k-1$  and weight $k+i-2$, that is,
		\begin{align*}
			X&=\sum_{ \substack{\a_j \geq 1;\a_1+\a_2+ \dots +\a_{k-1}\\=k+i-2}} z_{\a_1} z_{\a_2}\dots z_{\a_{k-1}} \\
			&=\sum_{ \substack{a_j \geq 0;a_1+a_2+ \dots +a_{k-1}\\=k+i-2-{(k-1)}}} z_{a_1+1} z_{a_2+1}\dots z_{a_{k-1}+1}\\
			&=\sum_{ \substack{a_j \geq 0;a_1+a_2+ \dots +a_{k-1}\\=i-1}} z_{a_1+1} z_{a_2+1}\dots z_{a_{k-1}+1}.
		\end{align*}

		Therefore from \eqref{equn51}, 
		
		\begin{align}
			\label{equn52}
			\tau[U]&=\sum_{i=3}^{n+1} (n+2-i)z_{n+3-i}\sum_{ \substack{a_1+a_2+ \dots +a_{k-1}=i-1;\\a_j \geq 0}} z_{a_1+1} z_{a_2+1}\dots z_{a_{k-1}+1} \nonumber \\
			&={(n-1)}z_n\sum_{ \substack{a_1+a_2+ \dots +a_{k-1}=2;\\a_j \geq 0}} z_{a_1+1} z_{a_2+1}\dots z_{a_{k-1}+1} \nonumber \\
			& \ \ \ \ \ + {(n-2)}z_{n-1}\sum_{ \substack{a_1+a_2+ \dots +a_{k-1}=3;\\a_j \geq 0}} z_{a_1+1} z_{a_2+1}\dots z_{a_{k-1}+1}\nonumber \\
			& \ \ \ \ \ + \dots + 2z_{3}\sum_{ \substack{a_1+a_2+ \dots +a_{k-1}=n-1;\\a_j \geq 0}} z_{a_1+1} z_{a_2+1}\dots z_{a_{k-1}+1}\nonumber \\
			& \ \ \ \ \ +	z_{2}\sum_{ \substack{a_1+a_2+ \dots +a_{k-1}=n;\\a_j \geq 0}} z_{a_1+1} z_{a_2+1}\dots z_{a_{k-1}+1}\nonumber \\
			&=\sum_{ \substack{a_1+a_2+ \dots +a_{k}=n;\\a_j \geq 0; 0\leq a_k \leq n-2}} {(a_k+1)}z_{a_k+2} z_{a_1+1} z_{a_2+1}\dots z_{a_{k-1}+1}.
		\end{align}

		Now
		\begin{align}
			\tau [V]&= \tau \big[\{n+{(-1)}^k\}\sum_{\a_1=2}^{k}z_{\a_1}z_{k+2-\a_1}z_1^{n-1}\big] \nonumber \\
			&=\{n+{(-1)}^k\}\sum_{\a_1=2}^{k}\tau\big[x^{\a_1-1}yx^{k+1-\a_1}y^n\big] \nonumber \\
			&=\{n+{(-1)}^k\}\sum_{\a_1=2}^{k}\big[x^ny^{k+1-\a_1}xy^{\a_1-1} \big] \nonumber \\
			&=\{n+{(-1)}^k\}\sum_{\a_1=2}^{k}\big[z_{n+1}z_1^{k-\a_1}z_2z_1^{\a_1-2}\big].
		\end{align}
		
		and
		\begin{align}
			\tau [W]&=\tau\big[\{1+{(-1)}^k\}(n+1)z_{k+1} z_1^{n}\big] \nonumber \\
			&=\{1+{(-1)}^k\}(n+1)\tau\big[ x^ky^{n+1}\big] \nonumber \\
			&=\{1+{(-1)}^k\}(n+1)\big[x^{n+1} y^k\big] \nonumber \\
			&=\{1+{(-1)}^k\}(n+1)z_{n+2}z_1^{k-1}.
		\end{align}
		
		Using the above obtained expressions of $\tau[U]$, $\tau [V]$, and  $\tau[W]$ in \eqref{equn48}, we get
		
		\begin{align}
			\label{equn55}
			& \hspace*{1cm}\sum_{r=0}^{k-2}{(-1)}^r[z_{2}z_1^{k-r-2} \shuffle z_{n+1}z_1^{r}] \nonumber \\
			&=\sum_{ \substack{a_1+a_2+ \dots +a_{k}=n;\\a_j \geq 0; 0\leq a_k \leq n-2}} {(a_k+1)}z_{a_k+2} z_{a_1+1} z_{a_2+1}\dots z_{a_{k-1}+1} \nonumber \\
			& \ \ \ \ +\{n+{(-1)}^k\}\sum_{\a_1=2}^{k}\big[z_{n+1}z_1^{k-\a_1}z_2z_1^{\a_1-2}\big] + \{1+{(-1)}^k\}(n+1)z_{n+2}z_1^{k-1} \nonumber \\
			&=\sum_{ \substack{a_1+a_2+ \dots +a_{k}=n;\\a_j \geq 0 }} {(a_k+1)}z_{a_k+2} z_{a_1+1} z_{a_2+1}\dots z_{a_{k-1}+1} \nonumber \\
			& \ \ \ \ +{(-1)}^k\sum_{\a_1=2}^{k}\big[z_{n+1}z_1^{k-\a_1}z_2z_1^{\a_1-2}\big] + {(-1)}^k(n+1)z_{n+2}z_1^{k-1} \nonumber \\
			&=\sum_{ \substack{a_1+a_2+ \dots +a_{k}=n;\\a_j \geq 0 }} {(a_k+1)}z_{a_k+2} z_{a_1+1} z_{a_2+1}\dots z_{a_{k-1}+1} \nonumber \\
			& \ \ \ \ +{(-1)}^k \big[z_{n+1}z_2z_1^{k-2}+z_{n+1}z_1z_2z_1^{k-3} + \dots + z_{n+1}z_1^{k-2}z_2 \nonumber \\
			& \hspace{14em}+ {(n+1)}z_{n+2}z_1^{k-1} \big].
		\end{align}
		
		Hence the theorem.
	\end{proof}
	

	\begin{theorem}
		For any integers $k,n \geq 1$,
		\begin{align}
			\label{equn64}
			\zeta^*(k+1,\underbrace{1, \dots ,1}_{n})&={(-1)}^{k-1} \big[\zeta({n+1,\underbrace{2,1, \dots, 1}_{k-1}})
			+\zeta({n+1,\underbrace{1,2,1, \dots, 1}_{k-1}}) \nonumber \\
			& \ \ \ \ + \dots  + \zeta({n+1,\underbrace{1,1, \dots, 1,2}_{k-1}})  +(n+1)\zeta({n+2,\underbrace{1, \dots, 1}_{k-1}})\big]  \nonumber \\
			& \ \ \ \ + \sum_{r=0}^{k-2}{(-1)}^r\zeta({k-r})\zeta({n+1,\underbrace{1, \dots, 1}_{r}}).
		\end{align}
	\end{theorem}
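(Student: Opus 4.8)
The plan is to derive \eqref{equn64} by applying the $\mathbb{Q}$-linear map $Z$ to both sides of the shuffle identity \eqref{equn46} established in the preceding theorem, and then to translate each group of words back into (multiple) zeta values. Every word appearing in \eqref{equn46} begins with the letter $x$ and ends with $y$, so it lies in $\hh^0$ and corresponds to an admissible index; hence applying $Z$ is legitimate and converts the single algebraic identity into a relation among zeta values. Because $Z$ is an algebra homomorphism for the shuffle product (the finite double shuffle relation $Z(w\shuffle v)=Z(w)Z(v)$), the left-hand side factors immediately.

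First I would treat the left-hand side of \eqref{equn46}. Applying $Z$ and using the double shuffle relation gives
$$Z\Big(\sum_{r=0}^{k-2}(-1)^r z_2 z_1^{k-r-2}\shuffle z_{n+1}z_1^{r}\Big)=\sum_{r=0}^{k-2}(-1)^r\,Z(z_2 z_1^{k-r-2})\,Z(z_{n+1}z_1^{r}).$$
Here $Z(z_{n+1}z_1^{r})=\zeta(n+1,\underbrace{1,\dots,1}_{r})$ directly, while $Z(z_2 z_1^{k-r-2})=\zeta(2,\underbrace{1,\dots,1}_{k-r-2})$. The key observation is that $(2,\underbrace{1,\dots,1}_{k-r-2})$ is an admissible index of height one whose dual index is the single index $(k-r)$; hence the duality formula (Theorem \ref{thm1}) yields $\zeta(2,\underbrace{1,\dots,1}_{k-r-2})=\zeta(k-r)$. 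This collapses the whole left-hand side to $\sum_{r=0}^{k-2}(-1)^r\zeta(k-r)\,\zeta(n+1,\underbrace{1,\dots,1}_{r})$, precisely the last line of \eqref{equn64}.

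Next I would treat the right-hand side of \eqref{equn46}. Its first sum, $Z\big(\sum_{a_1+\dots+a_k=n}(a_k+1)\,z_{a_k+2}z_{a_1+1}\cdots z_{a_{k-1}+1}\big)$, is exactly the expression identified with $\zeta^*(k+1,\underbrace{1,\dots,1}_{n})$ in Proposition \ref{p}, so it contributes the left-hand member of the theorem. The remaining bracketed words map term by term to $\zeta(n+1,2,\underbrace{1,\dots,1}_{k-2})+\cdots+\zeta(n+1,\underbrace{1,\dots,1}_{k-2},2)+(n+1)\zeta(n+2,\underbrace{1,\dots,1}_{k-1})$, all carrying the overall factor $(-1)^k$. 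Equating the two sides and solving for $\zeta^*(k+1,\underbrace{1,\dots,1}_{n})$ moves this bracket across the equality; since $-(-1)^k=(-1)^{k-1}$, its sign becomes $(-1)^{k-1}$, giving \eqref{equn64} verbatim.

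The passage is essentially bookkeeping once $Z$ is applied, so the one step that is not routine—the genuine content—is the invocation of duality on the left-hand side: I must verify that each word $z_2 z_1^{k-r-2}$ (which occurs only for $r\le k-2$, so that $k-r-2\ge 0$ and $k-r\ge 2$) really corresponds to an admissible height-one index whose dual is $(k-r)$, so that $\zeta(2,\underbrace{1,\dots,1}_{k-r-2})=\zeta(k-r)$ with $\zeta(k-r)$ convergent. A small boundary point to dispose of separately is the degenerate case $k=1$, where the shuffle sum is empty, the list of $\zeta(n+1,\dots)$ terms in the bracket is empty, and \eqref{equn46} reduces to a triviality; checking that \eqref{equn64} then reads $\zeta^*(2,\underbrace{1,\dots,1}_{n})=(n+1)\zeta(n+2)$ confirms the formula for all $k,n\ge 1$.
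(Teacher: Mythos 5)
Your proposal is correct and follows essentially the same route as the paper: apply $Z$ to the shuffle identity \eqref{equn46}, factor the left-hand side via the finite double shuffle relation, collapse $\zeta(2,\underbrace{1,\dots,1}_{k-r-2})$ to $\zeta(k-r)$ by duality, identify the first sum on the right with $\zeta^*(k+1,\underbrace{1,\dots,1}_{n})$ through Proposition \ref{p}, and rearrange. Your explicit attention to admissibility and to the degenerate case $k=1$ is slightly more careful than the paper's write-up but does not change the argument.
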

	
	\begin{proof}

		Applying the map $Z$ on both sides of \eqref{equn55}, we get
		
		\begin{align}
			\label{equn57}
			&\sum_{r=0}^{k-2}{(-1)}^r\zeta({2,\underbrace{1, \dots, 1}_{k-r-2}})\zeta({n+1,\underbrace{1, \dots, 1}_{r}}) \nonumber \\
			&=\sum_{ \substack{a_1+a_2+ \dots +a_{k}=n\\a_j \geq 0 }} {(a_k+1)}\zeta({a_k+2,a_1+1,a_2+1,\dots, a_{k-1}+1}) \nonumber \\
			& \ \ \ \ + {(-1)}^{k} \big[\zeta({n+1,\underbrace{2,1, \dots, 1}_{k-1}})
			+\zeta({n+1,\underbrace{1,2,1, \dots, 1}_{k-1}})  \nonumber \\
			& \ \ \ \ + \dots + \zeta({n+1,\underbrace{1,1, \dots, 1,2}_{k-1}})  +(n+1)\zeta({n+2,\underbrace{1, \dots, 1}_{k-1}})\big].
		\end{align}
		
		Since the indices $({2,\underbrace{1, \dots, 1}_{k-r-2}})$ and $(k-r)$ are dual to each other, by  Theorem 1.2,
		$$\zeta({2,\underbrace{1, \dots, 1}_{k-r-2}})=\zeta(k-r).$$
		
	 Using this in \eqref{equn57}, we get
		
		\begin{align}
			\label{equn58}
			&\hspace{5em}\sum_{r=0}^{k-2}{(-1)}^r\zeta({k-r})\zeta({n+1,\underbrace{1, \dots, 1}_{r}}) \nonumber \\
			& \hspace{5em}=\sum_{ \substack{a_1+a_2+ \dots +a_{k}=n\\a_j \geq 0 }} {(a_k+1)}\zeta({a_k+2,a_1+1,a_2+1,\dots, a_{k-1}+1}) \nonumber \\
			& \ \ \ \hspace{6em} + {(-1)}^{k} \big[\zeta({n+1,\underbrace{2,1, \dots, 1}_{k-1}})
			+\zeta({n+1,\underbrace{1,2,1, \dots, 1}_{k-1}}) \nonumber \\
			& \ \ \ \ \hspace{6em} + \dots  + \zeta({n+1,\underbrace{1,1, \dots, 1,2}_{k-1}})
			+(n+1)\zeta({n+2,\underbrace{1, \dots, 1}_{k-1}})\big] \nonumber \\
			& \implies \sum_{ \substack{a_1+a_2+ \dots +a_{k}=n\\a_j \geq 0 }} {(a_k+1)}\zeta({a_k+2,a_1+1,a_2+1,\dots, a_{k-1}+1}) \nonumber \\
			& \hspace{5em}={(-1)}^{k-1} \big[\zeta({n+1,\underbrace{2,1, \dots, 1}_{k-1}})
			+\zeta({n+1,\underbrace{1,2,1, \dots, 1}_{k-1}}) \nonumber \\
			& \ \ \ \ \hspace{5em} + \dots  + \zeta({n+1,\underbrace{1,1, \dots, 1,2}_{k-1}})
			+(n+1)\zeta({n+2,\underbrace{1, \dots, 1}_{k-1}})\big]  \nonumber \\
			& \ \ \ \ \hspace{5em} + \sum_{r=0}^{k-2}{(-1)}^r\zeta({k-r})\zeta({n+1,\underbrace{1, \dots, 1}_{r}}).
		\end{align}
		Hence from Proposition \ref{p}, the theorem follows.
		
	\end{proof}

\textbf{Proof of Theorem \ref{T105}:}	
	
		Since the indices $(k+1,\underbrace{1, \dots ,1}_{n-1})$ and $(n+1,\underbrace{1, \dots ,1}_{k-1})$ are dual to each other, by Theorem \ref{ohn},
		\begin{align}
			\label{e63}
			& \zeta(k+2,\underbrace{1, \dots ,1}_{n-1})+ \zeta(k+1,\underbrace{2,1, \dots ,1}_{n-1})+\zeta(k+1,\underbrace{1, 2, 1,  \dots ,1}_{n-1})+ \dots + \zeta(k+1,\underbrace{1, \dots ,1, 2}_{n-1}) \nonumber\\
			& =\zeta(n+2,\underbrace{1, \dots ,1}_{k-1})+ \zeta(n+1,\underbrace{2,1, \dots ,1}_{k-1})+\zeta(n+1,\underbrace{1, 2, 1,  \dots ,1}_{k-1})+ \dots + \zeta(n+1,\underbrace{1, \dots ,1, 2}_{k-1}).
		\end{align}
Therefore, from \eqref{equn64} and \eqref{e63}, one can easily obtain the theorem.\\


$\mathbf{Acknowledgement:}$ The research of first author is supported by the University assistance/support for research, NBU (Ref. No. 2302/ R-2022) and that of second author is supported by the University Grants Commission (UGC), India through NET-JRF (Ref. No. 191620198830).

	{}

\end{document}